\documentclass[english,a4paper]{amsart}




\usepackage{amssymb}
 \usepackage{amsthm}
 
 \usepackage{ulem}

\usepackage{lineno}



\usepackage{amsmath}

\usepackage{mathrsfs,amsfonts,dsfont} 
\usepackage{mathrsfs}
\usepackage{bbm}
\usepackage{amsfonts}
 \usepackage{dsfont}
 \usepackage{graphicx}
\usepackage{mathrsfs}
\usepackage{wrapfig}
\usepackage{tikz}
\usetikzlibrary{arrows,positioning,shapes.geometric}
\usepackage{amsrefs}
\usepackage{babel}
\usepackage[latin1]{inputenc}
\usepackage{eurosym}


\def\bE{\mathbb E}
\def\bP{\mathbb P}
\def\bR{\mathbb R}
\def\ss{\mathbb S}

\def\cA{{\mathcal A}}

\def\cD{{\mathcal D}}
\def\cR{{\mathcal R}}

\def\X {X:=\{X(s),s\in\ss\}}

\newtheorem{Def}{Definition}
\newtheorem{Th}{Theorem}[section]

\newtheorem{Co}[Th]{Corollary}
\newtheorem{Pro}[Th]{Proposition}
\newtheorem{Lem}[Th]{Lemma}

\newtheorem{remark}{Remark}

\numberwithin{equation}{section}

 \definecolor{violetb}{rgb}{0.7, 0.2, 0.5}

 \newcommand{\vero}[1]{{\color{black}#1}}
  
   \newcommand{\veron}[1]{{\color{black}#1}}

\newcommand{\cel}[1]{{\color{black}#1}}

\title{Spatial risk measure for \veron{max-stable and max-mixture} processes}
\author{M. Ahmed}
\address{Universit\'e de Lyon, Universit\'e Lyon 1, Institut Camille Jordan ICJ UMR 5208 CNRS, France\\
Department of statistics, University of Mosul, Iraq}
\author{ V. Maume-Deschamps}
\address{Universit\'e de Lyon, Universit\'e Lyon 1, Institut Camille Jordan ICJ UMR 5208 CNRS, France}
\author{P.Ribereau}
\address{Universit\'e de Lyon, Universit\'e Lyon 1, Institut Camille Jordan ICJ UMR 5208 CNRS, France}
\author{C.Vial}
\address{Universit\'e de Lyon, Universit\'e Lyon 1, Institut Camille Jordan ICJ UMR 5208 CNRS, France}
\cel{\address{INRIA, Villeurbanne, France}}
\keywords{Risk measures, Spatial dependence, Max-stable process, Max-Mixture process, Extreme value theory}


\begin{document}




\begin{abstract}%
\vero{In this paper, we consider isotropic and stationary max-stable, inverse max-stable and max-mixture processes $X=(X(s))_{s\in\bR^2}$ and the damage function $\cD_X^{\nu}= |X|^\nu$ with $0<\nu<1/2$. We study the quantitative behavior of a risk measure which is the variance of the average of $\cD_X^{\nu}$ over a region $\mathcal{A}\subset \bR^2$.} This kind of risk measure has already been introduced and studied for \vero{some} max-stable processes in \cite{koch2015spatial}. 
We evaluated the proposed risk measure by a simulation study.
\end{abstract}
\maketitle




\section{Introduction}\label{I:11}
Storms are the most destructive natural hazards in Europe. The economic  and the private sectors losses due to these extreme events are often \vero{important}. For example, during December 1999, three storms hit Europe causing insured losses above 10 billion \vero{\euro ~(see \cite{ulbrich2001three,re2001winterstorms,donat2011high})}. 
\vero{The storms may have a huge spatial component; in other words, the underlying spatial process may have a strong spatial dependence even at \veron{a} long distance.}\\ 
\\ 
One of the main \vero{characteristics of} climate  events is \vero{the} spatial dependence. \vero{Many dependence structures may arise: Asymptotic  dependence; Asymptotic  independence or both \cite{wadsworth2012dependence}}. The high impact of storm losses motivated us  to \vero{propose risk measures taking into account the spatial dependence.}\\
\\
In case \vero{of} univariate random variables, risk measures has been widely studied in \vero{the} literature and the corresponding axiomatic formulation has been presented in \cite{artzner1999coherent}. \vero{In \cite{follmer2014spatial} a collection of risk measures indexed by a network is introduced for some financial products. In spatial contexts,  the spatial dependence plays an important role. For example, wind speed and rainfall amount e.g. have different spatial behavior, so that, after normalization of their marginal distributions, the value of a risk measure should not be the same. }\\
In \cite{keef2009spatial}, the authors proposed to evaluate the risk on a region by a probability \veron{$\bP(S>s)$} where $S$ is an integrated damage function. In  \cite{koch2015spatial} or \cite{KOCHErwan2014tools} this idea is developed to define  spatial risk measures taking into account the spatial dependence. \vero{In \cite{ahmed2016spatial} the same idea is used: a risk measure constructed with the damage function $\cD_{X\/,u} = (X-u)^+$ with $u$ a fixed threshold for a Gaussian process $X$ is studied. In the same spirit as \cite{artzner1999coherent}, the authors propose a set of axioms that a risk measure in the spatial context should verify. This point of view has been previously adopted in \cite{koch2015spatial} for some } max-stable processes. 
Our main contributions \vero{concern the risk measure based  on the intensity damage function $\cD_X^{\nu}= |X|^\nu$ with  $0<\nu<1/2$, it consists in the development of the results from \cite{koch2015spatial}: further max-stable processes are involved and the computation technics are extended to max-mixture processes. We study the properties of the risk measure with respect to the parameters of each model (with a focus on the dependence parameter). We also study its axiomatic properties.} \\
This paper is organized as follows. \vero{Section \ref{II}  recalls definitions and properties of max-stable and max-mixture processes. In Section \ref{R:2} we consider spatial risk measures and recall the axiomatic setting from \cite{ahmed2016spatial} which derives from \cite{koch2015spatial}.} Section \ref{R:4} is devoted to the study of the risk measures with damage function $|X|^\nu$ for max-stable and max-mixture processes. We propose forms of this risk measures and derive its behavior. We present in Section \ref{R:5} a simulation study in order to evaluate this spatial risk measures. Concluding remarks are discussed in Section \ref{R:7}.
\section{Spatial extreme  processes}\label{II}
\vero{We shall focus on max-stable processes, inverse max-stable processes and max-mixtures of both, and we call these processes extreme processes. We shall emphasize on the modelization of the dependence structure and thus assume that the marginal laws have been normalized to unit Fréchet with distribution function $F(x)=\exp(-1/x)$, $x>0$ (see \cite{naveau2009modelling}).}
\subsection{Max-stable model}\label{max-stable}
\vero{This is an} extension of the multivariate extreme value theory to \veron{the spatial} setting. \vero{We refer to \cite{de1984spectral,de2007extreme} for definitions and properties of max-stable processes. We shall consider max-stable processes on $\ss\subset \bR^2$  with unit Fréchet marginal distributions (i.e. simple max-stable processes), for any $(s\/,t)\in \ss^2$,}
\begin{equation}
\begin{split}
\bP\big(X(s)\leq x_1, X(t)\leq x_2\big)=&G_{s,t}(x_1,x_2)\\
=&\exp(-V_{s,t}(x_1,x_2))\/,
\end{split}
\end{equation}
\vero{where $V_{s\/,t}$ \veron{is the} so-called exponent measure function. \veron{It} is homogenous of order $-1$ and satisfies  the bounds  }
  \begin{equation}\label{max47}
\max\{1/x_1,1/x_2\}\leq V_{s,t}(x_1,,x_2)\leq \{1/x_1+1/x_2\}.
\end{equation}
\vero{These bounds imply that $X$ is positive quadrant dependent (PQD),  see \cite{lehmann1966some} for definitions and properties of PQD processes. In this paper, we consider stationary and isotropic processes. Thus, the exponent measure $V_{s\/,t}$ and the distribution function $G_{s\/,t}$ depend only on the \veron{norm} $h=||s-t||$ and will be denoted \veron{by} $V_h$ and $G_h$.}\\
 \ \\
In \cite{de1984spectral}, it is also proved that every simple max-stable process $X$ has the following  spectral representation:
\begin{equation*}\label{max2BB}
X(s)=\max_{i\geq1}\xi_iW_i(s)\quad s\in\ss\/,
\end{equation*}
where $\{\xi_i,i\geq1\}$ is an i.i.d Poisson point process on $(0,\infty)$, with intensity $d\xi/\xi^2$ and $\{W_i,i\geq1\}$ are i.i.d  copies of a positive random field $W=\{W(s),s\in\ss\}$, such that $\bE[W(s)]=1$ for all $s$ and independent of $\xi_i$. \\
\ \\
Many dependence measures \vero{ for spatial processes $X$ have been introduced. These are generally bivariate dependence measures used in a spatial context. The tail dependence coefficient $\chi$ introduced in \cite{ledford1996statistics} is defined by}
 \begin{equation}\label{max49}
\chi(h)=\lim_{u\to1}\bP\big(F(X(s))>u|F(X(s+h))>u\big)\/.
\end{equation}
\vero{
If $\chi(h)=0$, \vero{the pair $(X(s+h)\/,X(s))$} is said to be asymptotically independent (AI).\\
If $\chi(h)\neq0$, \vero{the pair $(X(s+h)\/,X(s))$}  is said to be asymptotically dependent (AD).\\
The process is said AI (resp. AD) if for all $h\in\ss$ $\chi(h)=0$  (resp. $\chi(h)\neq0$).}
\vero{The extremal coefficient $\Theta(h) = V_h(1\/,1)$ satisfies $\chi(h) =2-\Theta(h)$ and (see \cite{wadsworth2012dependence}) }
\begin{equation}\label{max48}
G_h(x,x)=\exp(-\Theta(h)/x).
\end{equation}  
\vero{In \cite{coles1999dependence} an alternative definition of the tail dependence coefficient is given. }
 \begin{equation}\label{max51}
\chi(h,u)=2-\frac{\log\bP\big(F(X(s))<u,F(X(s+h))<u\big)}{\log\bP\big(F(X(s))<u\big)},\quad0\leq u\leq1.
\end{equation}
\vero{We have} $\lim_{u\to 1}\chi(h,u)=\chi(h)$. \\
\\
\vero{The spectral representation is useful to construct specific max-stable processes. We present \vero{three} of them: Smith, Schlater and truncated Schlater models.}\\
\ \\
\textbf{Smith Model}
\veron{Introduced in \cite{smith1990max}.}  It is defined on $\ss=\bR^d$. Its dependence structure is contained in a covariance matrix $\Sigma$. Let $\{(\xi_i,s_i)\}$ be \vero{a Poisson point }process on $(0,\infty)\times\bR^d$ with intensity $\xi^{-2}\mathrm{d}\xi\mathrm{d}s$ and  consider the $d$-dimensional Gaussian probability density function $\varphi_d(.;\Sigma)$ with mean 0 and covariance matrix $\Sigma$. For all $s \in \bR^d$, \vero{define}  $W_i(s)=\varphi_d(s-s_i;\Sigma)$ and
 \begin{equation*}
X(s)=\max_{i\geq1}\{\xi_i \varphi_d(s-s_i;\Sigma)\}.
\end{equation*}
The exponent measure function is given by
\begin{equation*}
V_h(x_1,x_2)=\frac{1}{x_1}\Phi\bigg(\frac{\tau(h)}{2}+\frac{1}{\tau(h)}\log\frac{x_2}{x_1}\bigg)+\frac{1}{x_2}\Phi\bigg(\frac{\tau(h)}{2}+\frac{1}{\tau(h)}\log\frac{x_1}{x_2}\bigg);
\end{equation*} 
\veron{with} $\tau(h)=\sqrt{h^T\Sigma^{-1}h}$ and $\Phi(\cdot)$ the standard normal cumulative distribution function.
 
 The pairwise extremal coefficient equals
  \begin{equation*}
\Theta(h)=2\Phi\bigg(\frac{\tau(h)}{2}\bigg).
\end{equation*}  

Note that if  the covariance  matrix is diagonal $\Sigma=\sigma \bf I_d$, then the process $X$ is isotropic as its bivariate distribution depends only on $h$ through the function $\tau(h)=\frac{1}{\sigma}\|h\|$.\\
 
\ \\
\textbf{Schlather Models}
This model introduced \veron{in} \cite{schlather2002models} provides a class based on \veron{a} stationary \veron{Gaussian}  random field. Let $W:=\{W(s),s\in\ss\}$ be a stationary random field, with $\bE\big[W^+(s)\big]=\mu\in(0,\infty)$ where $W^+(s)=\max\{0,W(s)\}$. Let $\{\xi_i,i\geq1\}$ be a Poisson point  process on $(0,\infty)$, with intensity $\mathrm{d}\xi/\xi^2$ and $\{W_i,i\geq1\}$ are iid copies of $W(s)$. Consider
\begin{equation*}
X(s)=\mu^{-1}\max_{i\geq1}\xi_iW^+_i(s),\quad s\in\ss\/,
\end{equation*} 
it defines a stationary max-stable process. Schlather proposed \veron{to take} a stationary Gaussian process  $W(s)$ with correlation function $\rho(\cdot)$ and $\mu^{-1}=\sqrt{2\pi}$. In this case, the resulting max-stable process $X$ is called \textit{ Extremal Gaussian process (EG)}. The exponent measure function is
\begin{equation*}
V_h(x_1,x_2)=\frac{1}{2}\bigg(\frac{1}{x_1}+\frac{1}{x_2}\bigg)\bigg[1+\sqrt{1-2(\rho(h)+1)\frac{x_1x_2}{(x_1+x_2)^2}}\bigg].
\end{equation*} 
The extremal coefficient is given by
 \begin{equation*}
\Theta(h)=1+\bigg(\frac{1-\rho(h)}{2}\bigg)^{1/2}.
\end{equation*} 
\vero{We have  $\lim_{h\to\infty}\chi(h)\neq 0$.} In other words, the asymptotic dependence persists even at infinite distances. This might be unrealistic in  applications. To overcome this problem a truncated version of $W(s)$ can be used. Let $\{r_i\}$ be a  homogenous  Poisson point process of unit rate on $\ss$ and $\mu^{-1}=\sqrt{2\pi}(\bE[|\mathcal{B}|])^{-1}$. Then, for a stationary Gaussian process $W_i(s)$, define
 \begin{equation}\label{max57}
X(s)=\max_{i\geq1}\xi_iW_i(s)\mathds{1}_{\mathcal{B}_i}(s-r_i),\quad s\in\ss
\end{equation} 
\vero{with $\mathcal{B} \subset \ss$ a compact random set and $\mathcal{B}_i$ i.i.d. copies of $\mathcal{B}$.}
\vero{The process $X$} \textit{is a truncated extremal Gaussian process (TEG)}.  The exponent measure function is given by 
\begin{equation*}
V_h(x_1,x_2)=\bigg(\frac{1}{x_1}+\frac{1}{x_2}\bigg)\bigg[1-\frac{\alpha(h)}{2}\bigg(1-\sqrt{1-2(\rho(h)+1)\frac{x_1x_2}{(x_1+x_2)^2}}\bigg)\bigg].
\end{equation*} 
The extremal coefficient is given by
 \begin{equation*}
\Theta(h)=2-\alpha(h)\left\{1-\bigg(\frac{1-\rho(h)}{2}\bigg)^{1/2}\right\}
\end{equation*} 
where $\alpha(h)=\bE\{|\mathcal{B}\cap(h+\mathcal{B})|\}/\bE[|\mathcal{B}|]$.\\
\\
\vero{\veron{Usually,} $\mathcal{B}$ \veron{is} a disk of radius $r$}. \veron{In that case,} $\alpha(h)=\{1-h/2r\}_+$.  For more details, see \cite{davison2012geostatistics}. That leads to $\chi(h)=0,\forall h\geq2r$. In other word the process $X$ is  \veron{asymptotically independent (and thus independent because it is max-stable)} for all $h\geq2r$. 
\subsection{Inverse max-stable processes}
\vero{Max-stable processes are either AD or they are independent. This behavior may be unapropriate in applications: data may reveal asymptotic independence without being independent. }
\\ 
\vero{In \cite{coles1999dependence} the \textbf{lower tail dependence coefficient $\overline{\chi}(h)$}  is proposed in order to study the strengt of dependence in AI cases.}
\begin{equation}\label{ind2}
\overline{\chi}(h)=\lim_{u\to1}\frac{2\log\bP\big(F(X(s))>u\big)}{\log\bP\big(F(X(s))>u,F(Y(s+h))>u\big)} \veron{-1}\/,\quad 0\leq u\leq1.
\end{equation} 
 
\vero{We have $-1\leq \overline{\chi}(h)\leq 1$ and the spatial process is asymptotically dependent if $\overline{\chi}(h)=1$. Otherwise, it is asymptotically independent.}
\\
\vero{We shall consider a class of asymptotically independent processes introduced in   \cite{wadsworth2012dependence}: \textbf{Inverse max-stable process}. Let $X'$ be a max-stable process with unit Fréchet margin, consider} 
$$
 X(s)=g(X(s))=-1/\log\{1-e^{-1/X'(s)}\}\quad s\in \ss\/.
$$
\vero{Then $X$ is asymptotically independent with unit Fréchet margin and bivariate  survivor function }
$$\bP\big(X(s_1)>x_1,X(s+h)>x_2\big)=\exp\big(-V_h\big(g(x_1),g(x_2)\big)\big).
$$
where $V_h$ is the exponent \vero{measure function of $X'$. We a slight langage abuse, we shall say that $V_h$ is the exponent measure function of $X$.} 
%
\vero{Inverse max-stable processes enter in the class of processes defined in \cite{ledford1996statistics} which satisfy:
$$ \bP\big(X(s)>x,X(s+h)>x\big)=\mathcal{L}_h(x)x^{-1/\eta(h)},\quad x\to\infty
$$
where $\mathcal{L}_h(x)$ is \veron{a} slowly varying function and $\eta(h)\in(0,1]$ is called \textbf{ the tail dependence coefficient}. For these kind of processes, the AI is caracterized by $\eta(h)<1$.}
\subsection{Max-Mixture model}
\vero{In spatial contexts, specifically in environmental domain many scenarios of dependence could arise and AD and AI might cohabite. 
The  work by \cite{wadsworth2012dependence} provides a flexible model called max-mixture}.\\
\vero{Let $X$  be a max-stable process, with extremal coefficient $\Theta(h)$ and exponent measure function $V_h^X$. Let $Y$ be an inverse max-stable process with tail dependence coefficient $\eta(h)$ and exponent measure function $V_h^Y$}. Assume that $X$ and $Y$ are independent and each of them has Fréchet margin. Let  $a\in[0,1]$ and define
$$Z(s)=\max\{aX(s),(1-a)Y(s)\}, \quad s\in\ss\/.
$$
 \vero{$Z$ has unit  Fréchet marginals. Its bivariate distribution function is given by}
 \begin{equation}\label{max22}
\bP\big(Z(s)\leq z_1,Z(s+h)\leq z_2\big)=e^{-aV^X_h(z_1,z_2)}\bigg[e^{\frac{-(1-a)}{z_1}}+e^{\frac{-(1-a)}{z_2}}-1+e^{-V^Y_h(g_a(z_1),g_a(z_2))}\bigg]\/,
\end{equation} 
\vero{where  $g_a(z) = g(\frac{z}{1-a})$.
Its bivariate survivor function satisfies}
$$\bP\big(Z(s)>z,Z(t)>z\big)\sim\frac{a\{2-\Theta(h)\}}{z}+\frac{(1-a)^{1/\eta(h)}}{z^{1/\eta{(h)}}}+O(z^{-2}),\quad z\to\infty.
$$
\vero{If  $h^*=\inf\{h:\Theta(h)\neq 0\}<\infty$, then $Z$ is asymptotically dependent up to distance $h^*$ and asymptotically independent for larger distances.} See \cite{bacro2016flexible} for more details. \veron{Of course, if $a=0$ then $Z$ is indeed an inverted max-stable process. If $a=1$ then $Z$ is a max-stable process. }
 Moreover   
\begin{equation}\label{co1}
\chi(h)=a(2-\Theta(h))
\end{equation}
and 
\begin{equation}\label{co2}
\overline{\chi}(h)=\mathds{1}_{[h^*< h]}(h)+(2\eta(h)-1)\mathds{1}_{[h*\geq h)}.
\end{equation}
\section{Spatial risk measures. }\label{R:2}
Consider a \vero{spatial process $\X$, $\ss\subset\bR^2$. We use the definition of risk measures proposed in  \cite{ahmed2016spatial} and \cite{koch2015spatial}. Given a damage function $\mathcal{D}~:\cR^2~\longrightarrow~\cR^+$, and
 $\cA\in\mathcal{B}(\bR^d)$,  the normalized aggregate loss function on $\mathcal{A}$ is
\begin{equation*}
  L(\cA,\mathcal{D})=\frac{1}{|\cA|}\int_{\cA}\mathcal{D}(s)\/\mathrm{d}s,
\end{equation*}
{where $| \cA|$ stands for the volume of $\cA$.}
}
The quantity $\displaystyle\int_{\cA}\mathcal{D}(s)\mathrm{d}s$ represents the aggregated loss over the region $\cA$. Therefore the function $ L(\cA,\mathcal{D})$ is the proportion of loss on a $\cA$.
\subsection{Definition of spatial risk measures.}\label{R} 
In this paper, \vero{we work with unit Fréchet margin processes and thus $(X-u)^+$ as no finite expectation nor variance. The risk measure considered in \cite{ahmed2016spatial} is not suitable. In \cite{koch2015spatial}, the damage function $\mathbf{1}_{\{X>u\}}$ is considered and the subsequent risk measure is computed for  Smith, Schlater and the so-called tube processes. This damage function does not take into account the behaviour of the process over the threshold $u$, this is why, we did not consider it. We shall consider the damage function 
\begin{equation*}
  \mathcal{D}^\nu_{X}(s)=|X(s)|^\nu\/,
\end{equation*}
for $0<\nu<\frac12$. This type of damage function is used e.g. in analyzing  the negative effects due to the wind speed (see  \cite{re2013natural} for more details). In  \cite{koch2015spatial}, the risk measure associated to $\mathcal{D}^\nu_X$ has been computed for Smith processes.
}
%

\vero{Since we work with stationary processes, the expectation of the normalized loss function do not take into account the dependence structure. As in  \cite{ahmed2016spatial} and \cite{koch2015spatial}, we shall focus on its variance. 

$$ \cR_1(\mathcal{A},\mathcal{D}_X)\} = \mathrm{Var}\big(L(\mathcal{A},\mathcal{D}_X)\big)\/.$$ 
If $X$ is a spatial process with unit Fréchet marginal distributions, $\cR_1(\mathcal{A},\mathcal{D}_X^\nu)$ is well defined provided that $0<\nu<\frac12$.\\
Remark that}
\begin{equation}\label{SRE-risk1b}
\cR_1(\mathcal{A},\mathcal{D}_X)\}=\frac{1}{|\cA|^2}\int_{\cA\times\cA}\mathrm{Cov}\big(\cD_X(s),\cD_X(t)\big)\mathrm{d}s\mathrm{d}t.
\end{equation} 
\subsection{Axiomatic properties of spatial risk measures.}\label{R:3}
Several authors such as \cite{artzner1999coherent}, \cite{krokhmal2007higher} and \cite{tsanakas2003risk} presented \vero{an axiomatic setting for univariate risk measures. In \cite{koch2015spatial} a first set of axioms for risk measures in spatial context is considered for} the damage functions: $\cD_X(s) = \mathbf{1}_{\{X(s)>u\}}$, $\cD_X(s)= X(s)^\nu$ \vero{where $X$ is a max-stable process. In   \cite{ahmed2016spatial} the damage function  $\cD_X(s) = (X(s)-u)^+$ for Gaussian processes has been investigated.} 
\\
\ \\
\vero{Let us recall the mains axioms proposed in \cite{koch2015spatial} and \cite{ahmed2016spatial} \veron{for  the real valued spatial risk measure $\cR_1(\cA,\cD)$.}} 
Axioms 1. and 4. below have been introduced in \cite{koch2015spatial}, and studied for some max-stable processes. 
\begin{Def}\label{def:axioms}
Let $\cA\subset\bR^2$ be a region of the space. 
\begin{enumerate}
\item \textbf{Spatial invariance under translation}\\
Let $\cA+v\subset\bR^2$ be the region $\cA$ translated by a vector $v\in\bR^2$. Then for $v\in\bR^2$, {$\cR_1(\cA+v,\cD)=\cR_1(\cA,\cD)$}.

\item \textbf{{Spatial anti-monotoncity}}\\
 Let $\cA_1$,$\cA_2\subset\bR^2$, two regions such that { $|\cA_1|\leq|\cA_2|$}, then {$\cR_1(\cA_2,\cD)\leq\cR_1(\cA_1,\cD)$}.
\item \textbf{{Spatial sub-additivity}}\\
Let $\cA_1$,$\cA_2\subset\bR^2$ be two regions disjointed, then {$\cR_1(\cA_1\cup\cA_2,\cD)\leq\cR_1(\cA_1,\cD)+\cR_1(\cA_2,\cD)$}.
\item \textbf{Spatial super sub-additivity}\\
Let $\cA_1$,$\cA_2\subset\bR^2$ be two regions disjointed, then {$\cR_1(\cA_1\cup\cA_2,\cD)\leq\min_{i=1\/,2}\left[\cR_1(\cA_i,\cD)\right]$}.
\item \textbf{{Spatial homogeneity}}\\
Let $\lambda>0 $ and $\cA\subset \cR^2$ then {$\cR_1(\lambda\cA,\cD)=\lambda^k\cR_1(\cA,\cD)$, that is $\cR_1$ }is homogenous of order $k$, {where $\lambda \cA$ is the set $\{\lambda x, x\in\cA\}$.}

\end{enumerate}
\end{Def}

\vero{In  \cite{koch2015spatial} the invariance by translation, the monotonicity and super sub-additivity in the case where $\cA_1\/, \cA_2$ are either disks or squares is proved for max-stable processes for the damage function $\mathbf{1}_{\{X>u\}}$ and for the damage function $X^\nu$ in the case of the Smith process. While in \cite{ahmed2016spatial}  the invariance by translation and  sub-additivity is proved for any processes provided that $\cD_X$ admits an order $2$ moment. The anti-monotonicity \veron{for disks and squares} is proved for the damage function $(X-u)^+$ with $X$ a Gaussian process. We shall study further the properties of $\cR_1(\mathcal{A},\mathcal{D}_X)$ for max-mixture processes.}

\section{Risk measures for max-mixture processes.}\label{R:4}
\vero{Let  $X$ an isotropic and stationary process, with unit Fréchet margin, let $0<\nu<1/2$ be a fixed.}
\subsection{General forms for $\cR_1(\mathcal{A},\mathcal{D}_{X}^\nu)$}
\vero{The following result shows that the computation of $\cR_1(\mathcal{A},\mathcal{D}_X)$ may reduce to smaller dimension integral. It has been proved in  \cite{KOCHErwan2014tools} for Smith models. Following the lines of its proof, it remains valid provided that the damage function $X^\nu$ has an order $2$ moment (see Theorem 3.3 in \cite{ahmed2016spatial}). \\
\ \\Let $f_{disk}(\cdot,R)$ and $f_{square}(\cdot,R)$ be the density of the distance between two points randomly chosen in a disk of radius $R$ and a square of side $R$ respectively. 
We have (see \cite{moltchanov2012distance}):
\begin{equation*}
f_{disk}(h,R)=\frac{2h}{R^2}\bigg(\frac{2}{\pi}\mathrm{{arccos}}\big(\frac{h}{2R}\big)-\frac{h}{\pi R}\sqrt{1-\frac{h^2}{4R^2}}\bigg),
\end{equation*}
and
\begin{equation*}
f_{square}(h,R)= \frac{2\pi h}{R^2}-\frac{8h^2}{R^3}+\frac{2h^3}{R^4}
\end{equation*}
where $b=\frac{h^2}{R^2}$.
}
 \begin{Lem}\label{disk-max}
\vero{ Let $\X$ be an isotropic and stationary spatial process such that the damage function $\cD_X$ has finite order $2$ moment. \\
Let $\mathcal{Q}(h)=\mathrm{Cov}\big(\cD_X(s),\cD_X(s+h)\big)$.\\
Consider $\cA\subset \bR^2$ a disk of radius $R$, we have:
\begin{equation}\label{Th-disk}
\cR_1(\cA,\cD_X)=\mathrm{Var}\big(L(\mathcal{A},\mathcal{D}_X)\big)=\int_{h=0}^{2R}\mathcal{Q}(h) f_{disk}(h,R)\mathrm{d}h,
\end{equation}
Consider $\cA\subset \bR^2$ a  square of side $R$, we have:\\
\begin{equation}\label{Th-square}
\cR_1(\cA,\cD_X)=\mathrm{Var}\big(L(\mathcal{A},\mathcal{D}_X)\big)=\int_{h=0}^{\sqrt{2}R}\mathcal{Q}(h) f_{square}(h,R)\mathrm{d}h,
\end{equation}
}
 \end{Lem}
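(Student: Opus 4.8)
The plan is to start from the general representation (\ref{SRE-risk1b}) of the risk measure as a normalized double integral of the covariance over $\cA\times\cA$, and to convert it into a one-dimensional integral by recognizing it as an expectation taken with respect to the law of the distance between two independent points drawn uniformly at random in $\cA$. The only genuinely nontrivial input, namely the explicit density of that interpoint distance, is imported from \cite{moltchanov2012distance} and stated just before the lemma, so the bulk of the argument is a change of viewpoint together with a justification of integrability.

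First I would exploit the stationarity and isotropy of $\X$. Because the process is both stationary and isotropic, the covariance $\mathrm{Cov}\big(\cD_X(s),\cD_X(t)\big)$ depends on the pair $(s,t)$ only through the norm $h=\|s-t\|$; this is precisely the function $\mathcal{Q}(h)$ of the statement. The finite second-moment hypothesis on $\cD_X$ guarantees that $\mathcal{Q}(h)$ is finite for every $h$, and by Cauchy--Schwarz one has the uniform bound $|\mathcal{Q}(h)|\leq\mathcal{Q}(0)=\mathrm{Var}(\cD_X(s))<\infty$, which I will invoke later for integrability. Substituting into (\ref{SRE-risk1b}) yields
\begin{equation*}
\cR_1(\cA,\cD_X)=\frac{1}{|\cA|^2}\int_{\cA\times\cA}\mathcal{Q}(\|s-t\|)\,\mathrm{d}s\,\mathrm{d}t.
\end{equation*}

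The key step is then the probabilistic reinterpretation. Let $S$ and $T$ be two independent random points, each uniformly distributed on $\cA$, so that their joint density on $\cA\times\cA$ is $|\cA|^{-2}$. The displayed integral is then exactly $\bE\big[\mathcal{Q}(\|S-T\|)\big]$, the expectation of $\mathcal{Q}$ evaluated at the random distance $H:=\|S-T\|$. By the transfer (change of variables) formula for expectations this equals the integral of $\mathcal{Q}$ against the law of $H$. For $\cA$ a disk of radius $R$ the support of $H$ is $[0,2R]$ and for $\cA$ a square of side $R$ it is $[0,\sqrt{2}R]$, these being the respective diameters, and the Lebesgue density of $H$ on these supports is precisely $f_{disk}(h,R)$ (resp. $f_{square}(h,R)$) by \cite{moltchanov2012distance}. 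Inserting these densities produces the claimed formulas (\ref{Th-disk}) and (\ref{Th-square}).

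The reasoning is a reduction to the distribution of the interpoint distance, so there is no serious analytic obstacle once the distance densities are taken as given. The two points that require care are (i) noting that it is stationarity \emph{and} isotropy together, not either alone, that collapse the two-variable covariance to the single-variable function $\mathcal{Q}$, and (ii) confirming absolute convergence: since $f_{disk}$ and $f_{square}$ are genuine probability densities on bounded supports and $|\mathcal{Q}(h)|\leq\mathcal{Q}(0)<\infty$, both one-dimensional integrals converge absolutely, which legitimizes the use of Fubini's theorem implicit in passing between the double integral and the expectation.
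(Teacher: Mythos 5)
Your proof is correct and is essentially the argument the paper relies on: the paper itself gives no proof of Lemma \ref{disk-max}, deferring to \cite{KOCHErwan2014tools} and Theorem 3.3 of \cite{ahmed2016spatial}, and the proof there is precisely your reduction of the double integral in (\ref{SRE-risk1b}) to an expectation over the interpoint distance of two independent uniform points, with the densities $f_{disk}$ and $f_{square}$ imported from \cite{moltchanov2012distance}. Your added care about integrability (the bound $|\mathcal{Q}(h)|\leq\mathcal{Q}(0)<\infty$ via Cauchy--Schwarz and the compact support of the distance densities) is a welcome detail that the cited sources treat only implicitly.
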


In what follows, \vero{ results are written for square regions $\cA$}, but the results hold for disks as well.
\vero{\begin{remark}\label{Rm:1}
Properties of moments of Fréchet distributions give that if $X$ has unit Fréchet marginal distributions, 
$$\mathbb{E}(L(\cA\/,\cD_X^\nu)) = \Gamma(1-\nu)\/.$$
\end{remark}}

\begin{Pro}\label{hoff}
Consider $\X$ an isotropic and stationary spatial  process with unit Fréchet margin $F$ and  pairwise distribution function $G_h^X=\bP(X(s)\leq x_1,X(s+h)\leq x_2)$. Let $\cA$ \veron{be} a square of side $R$. We have
\begin{equation}\label{CTEG}
\cR_1( \cA,\cD_X^{\nu})=\int_{h=0}^{\sqrt{2}R} \mathcal{Q}(h,\nu)f_{square}(h,R)\mathrm{d}h, 
\end{equation}
with 
\begin{equation*}
\mathcal{Q}(h,\nu)=\mathrm{Cov}\big(\cD_X^{\nu}(s),\cD_X^{\nu}(s+h)\big);
\end{equation*} 
\begin{equation}\label{TEG-t}
\mathcal{Q}(h,\nu)=\int_{0}^{\infty}\int_{0}^{\infty}\big[G_h^X(x_1^{1/\nu},x_2^{1/\nu})-F(x_1^{1/\nu})F(x_2^{1/\nu})\big]\mathrm{d}x_1\mathrm{d}x_2
\end{equation}
or equivalently
\begin{equation}\label{teg-QQ}
\mathcal{Q}(h,\nu) =\nu^2\int_{0}^{\infty}\int_{0}^{\infty}x_1^{\nu-1}x_2^{\nu-1}\big[G_h^X(x_1,x_2)-F(x_1)F(x_2)\big]\mathrm{d}x_1\mathrm{d}x_2.
\end{equation}
\end{Pro}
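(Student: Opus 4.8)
The plan is to prove the three displayed identities in turn: formula \eqref{CTEG} is a direct specialization of Lemma~\ref{disk-max}, while \eqref{TEG-t} and \eqref{teg-QQ} come from the Hoeffding covariance representation followed by a monotone change of variables.

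First I would note that \eqref{CTEG} is nothing but \eqref{Th-square} applied to the damage function $\cD_X^\nu$, so the only thing to check is that $\cD_X^\nu$ has a finite order $2$ moment. Since $X(s)$ has unit Fr\'echet law, supported on $(0,\infty)$, we have $|X(s)|^\nu=X(s)^\nu$ and, using the moment formula $\bE[X(s)^\alpha]=\Gamma(1-\alpha)$ for $\alpha<1$ that underlies Remark~\ref{Rm:1},
\begin{equation*}
\bE\big[(X(s)^\nu)^2\big]=\bE\big[X(s)^{2\nu}\big]=\Gamma(1-2\nu),
\end{equation*}
which is finite precisely because $0<\nu<\tfrac12$. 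This legitimizes the use of Lemma~\ref{disk-max} and, by Cauchy--Schwarz, guarantees that $\mathcal{Q}(h,\nu)=\mathrm{Cov}\big(X(s)^\nu,X(s+h)^\nu\big)$ is well defined and finite.

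Next, to obtain \eqref{TEG-t} I would invoke the Hoeffding covariance identity: for a pair $(U,V)$ with joint distribution function $H$, marginals $F_U,F_V$ and finite second moments,
\begin{equation*}
\mathrm{Cov}(U,V)=\int_{\bR^2}\big[H(u,v)-F_U(u)F_V(v)\big]\,\mathrm{d}u\,\mathrm{d}v,
\end{equation*}
whose standard proof (via an independent copy $(U',V')$, the identity $2\,\mathrm{Cov}(U,V)=\bE[(U-U')(V-V')]$, and Fubini) simultaneously delivers the absolute convergence of the right-hand integral once the covariance is finite. I apply this with $U=X(s)^\nu$ and $V=X(s+h)^\nu$. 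Because $t\mapsto t^{1/\nu}$ is increasing on $(0,\infty)$, the joint distribution function of $(U,V)$ is $H(u,v)=\bP\big(X(s)\le u^{1/\nu},X(s+h)\le v^{1/\nu}\big)=G_h^X(u^{1/\nu},v^{1/\nu})$, with marginals $F_U(u)=F(u^{1/\nu})$ and $F_V(v)=F(v^{1/\nu})$. Since $U,V\ge 0$ almost surely, the integrand vanishes whenever $u<0$ or $v<0$, so the domain collapses to $(0,\infty)^2$, which is exactly \eqref{TEG-t}.

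Finally, \eqref{teg-QQ} follows from \eqref{TEG-t} by the substitution $x_1=u^{1/\nu}$, $x_2=v^{1/\nu}$, i.e. $u=x_1^\nu$, $\mathrm{d}u=\nu x_1^{\nu-1}\,\mathrm{d}x_1$ and likewise in the second variable, which contributes the Jacobian factor $\nu^2 x_1^{\nu-1}x_2^{\nu-1}$ and turns the arguments $u^{1/\nu},v^{1/\nu}$ into $x_1,x_2$. The only point requiring care throughout is integrability: the finite--second--moment hypothesis makes $\int_{\bR^2}\big|H-F_UF_V\big|$ finite, so both Hoeffding's identity and the change of variables are justified by Fubini for an absolutely integrable integrand. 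I do not expect a genuine obstacle here; for the max-stable, inverse max-stable and max-mixture processes of interest one even has $G_h^X-F\cdot F\ge 0$ by positive quadrant dependence, so the integrand is nonnegative and Tonelli's theorem applies directly, removing any delicacy from the substitution.
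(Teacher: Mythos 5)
Your proof is correct and follows essentially the same route as the paper: equation \eqref{CTEG} is read off from Lemma \ref{disk-max}, and \eqref{TEG-t}--\eqref{teg-QQ} come from Hoeffding's covariance identity applied to the non-negative pair $\big(X(s)^\nu, X(s+h)^\nu\big)$ followed by the substitution $u=x_1^\nu$, $v=x_2^\nu$. You are in fact somewhat more careful than the paper, which states the Hoeffding step and the change of variables without the finite-second-moment check $\bE[X^{2\nu}]=\Gamma(1-2\nu)<\infty$ or the Fubini/Tonelli justification that you supply.
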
 
\begin{proof}
\vero{ Since $X$ is a non negative process, the result follows directly from Hoeffding's identity (\cite{hougaard2012analysis}  and \cite{sen1994impact}): }
 \begin{eqnarray*}
 \lefteqn{\mathrm{Cov}\big(\cD_X^{\nu}(s),\cD_X^{\nu}(s+h)\big)}\\
& =&\iint_{\bR_+^2}\big[\bP\big(X(s)^\nu\leq x_1,X(s+h)^\nu\leq x_2\big)\\
&& -\bP\big(X(s)^\nu\leq x_1\big)\bP\big(X(s+h)^\nu\leq x_2\big)\big]\mathrm{d}x_1\mathrm{d}x_2 \\
&=&\nu^2\iint_{\bR^+} x_1^{\nu-1}x_2^{\nu-1}\big[\bP\big(X(s)\leq x_1,X(s+h)\leq x_2\big)\\
&& -\bP\big(X(s)\leq x_1\big)\bP\big(X(s+h)\leq x_2\big)\big]\mathrm{d}x_1\mathrm{d}x_2.
\end{eqnarray*}
 \end{proof}
 \subsection{Explicit form \veron{for} $\cR_1( \cA,\cD_X^{\nu})$ for TEG max-stable process $X$}
\vero{Equation (\ref{CTEG}) shows that, \veron{if $\cA$ is either a disk or a square}, the computation of  $\cR_1( \cA,\cD_X^{\nu})$ reduces to the integration of $\mathcal{Q}(h\/,\nu) f_{square}$ (resp. $\mathcal{Q}(h\/,\nu) f_{disk}$). In \cite{KOCHErwan2014tools}, the computation of $\mathcal{Q}(h\/,\nu) f_{square}$ for the Smith model has been done. In that case, the computation of  $\cR_1( \cA,\cD_X^{\nu})$ is  reduced to a one dimensional integration. In this section, we do the computation for a TEG model.}
 \begin{Co}\label{TEG}
 \textcolor{black}{
Let $\X$ be a truncated extremal Gaussian TEG max-stable process with unit Fréchet margin, correlation function $\rho$ and truncated parameter $r$. For  $0<\nu<1/2$, we have 
\begin{equation*}\begin{split}
\mathcal{Q}(h,\nu) =&\\
\int_0^{+\infty}w^{\nu}\bigg[&\Gamma{(2(1-\nu))}\mathcal{T}_2(w,h)\mathcal{T}_1(w,h)^{2(\nu-1)}+\Gamma{(1-2\nu)}\mathcal{T}_3(w,h)\mathcal{T}_1(w,h)^{2\nu-1}\bigg]\mathrm{d}w\\
-&\big[\Gamma(1-\nu)\big]^2
\end{split}
\end{equation*}
where, 
\begin{equation}\label{TT:1}
\mathcal{T}_1(w,h)=\frac{w+1}{w}\bigg[1-\frac{\alpha(h)}{2}\big(1-\mathcal{K}(w,h)\big)\bigg];
\end{equation}
\\
\begin{equation}\label{TT:2}
\begin{split}
\mathcal{T}_2(w,h)=&\bigg[1-\frac{\alpha(h)}{2}\big(1-\mathcal{K}(w,h)\big)-\frac{\alpha(h)(\rho(h)+1)(1-w)}{2\mathcal{K}(w,h)(w+1)^2}\bigg]\\
\\
\times &\bigg[\frac{1}{w^2}-\frac{\alpha(h)}{2w^2}\big(1-\mathcal{K}(w,h)\big)-\frac{\alpha(h)(\rho(h)+1)(w-1)}{2w\mathcal{K}(w,h)(w+1)^2}\bigg];
\end{split}
\end{equation}
\\
\begin{equation}\label{TT:3}
\mathcal{T}_3(w,h)=\alpha(h)\bigg[\frac{(\rho(h)+1)}{\mathcal{K}(w,h)(w+1)^3}-\frac{(\rho(h)+1)^2(w-1)^2}{2\mathcal{K}(w,h)^3(w+1)^5}\bigg];
\end{equation}
\\
$$\mathcal{K}(w,h)=\bigg[1-\frac{2w(\rho(h)+1)}{(w+1)^2}\bigg]^{1/2} $$
and $\alpha(h)=\{1-\frac{h}{2r}\}_+$.
}
\end{Co}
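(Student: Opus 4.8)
The plan is to compute the cross moment $\mathbb{E}[X(s)^\nu X(s+h)^\nu]$ directly from the joint density rather than through Hoeffding's identity, since the two Gamma factors and the \emph{derivative} quantities $\mathcal{T}_2,\mathcal{T}_3$ appearing in the statement point to that route. First I would write $\mathcal{Q}(h,\nu)=\mathbb{E}[X(s)^\nu X(s+h)^\nu]-\big(\mathbb{E}[X(s)^\nu]\big)^2$. Since $X(s)$ is unit Fr\'echet, the substitution $u=1/x$ gives $\mathbb{E}[X(s)^\nu]=\int_0^\infty x^{\nu-2}e^{-1/x}\,dx=\Gamma(1-\nu)$, which produces the subtracted term $[\Gamma(1-\nu)]^2$. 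It then remains to evaluate the cross moment as $\iint_{\mathbb{R}_+^2}x_1^\nu x_2^\nu\,g_h(x_1,x_2)\,dx_1dx_2$, where $g_h=\partial^2_{x_1x_2}G_h^X$; differentiating $G_h^X=e^{-V_h}$ gives $g_h=\big(\partial_1V_h\,\partial_2V_h-\partial^2_{12}V_h\big)e^{-V_h}$, with $V_h$ the TEG exponent measure function recalled above. The finiteness of this moment for $0<\nu<1/2$ follows from Cauchy--Schwarz, since $\mathbb{E}[X(s)^{2\nu}]=\Gamma(1-2\nu)<\infty$, and this also legitimizes the integral manipulations below.

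The key simplification is the change of variables $x_1=t$, $x_2=wt$ (Jacobian $t$), which exploits the homogeneity of order $-1$ of $V_h$. One checks directly from the explicit TEG formula that $V_h(1,w)$ equals the function $\mathcal{T}_1(w,h)$ of \eqref{TT:1}, so that $V_h(t,wt)=\mathcal{T}_1(w,h)/t$. Writing $\mathcal{T}_1'=\partial_w\mathcal{T}_1$ and differentiating $V_h=t^{-1}\mathcal{T}_1(w)$ through $w=x_2/x_1$, I obtain $\partial_1V_h=-t^{-2}(\mathcal{T}_1+w\mathcal{T}_1')$, $\partial_2V_h=t^{-2}\mathcal{T}_1'$ and $\partial^2_{12}V_h=-t^{-3}(2\mathcal{T}_1'+w\mathcal{T}_1'')$. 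After substitution the cross-moment integrand becomes $w^\nu\,t^{2\nu+1}\,(\partial_1V_h\,\partial_2V_h-\partial^2_{12}V_h)\,e^{-\mathcal{T}_1/t}$, that is, a sum of two terms carrying $t^{2\nu-3}$ (from $\partial_1V_h\,\partial_2V_h$, with prefactor $-(\mathcal{T}_1+w\mathcal{T}_1')\mathcal{T}_1'$) and $t^{2\nu-2}$ (from $-\partial^2_{12}V_h$, with prefactor $2\mathcal{T}_1'+w\mathcal{T}_1''$).

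The radial integration is then carried out with the elementary identity $\int_0^\infty t^{p}e^{-c/t}\,dt=c^{\,p+1}\Gamma(-p-1)$, valid exactly when $p<-1$. With $p=2\nu-3$ this yields the factor $\Gamma(2(1-\nu))$ and the power $\mathcal{T}_1^{2(\nu-1)}$, and with $p=2\nu-2$ it yields $\Gamma(1-2\nu)$ and $\mathcal{T}_1^{2\nu-1}$; both exponents satisfy $p<-1$ precisely because $0<\nu<1/2$, which is exactly where the hypothesis on $\nu$ enters. Collecting terms, the remaining $w$-integral is the one stated, with $\mathcal{T}_2:=-(\mathcal{T}_1+w\mathcal{T}_1')\mathcal{T}_1'$ and $\mathcal{T}_3:=2\mathcal{T}_1'+w\mathcal{T}_1''$.

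It remains to verify that these two expressions coincide with \eqref{TT:2} and \eqref{TT:3}. Differentiating $\mathcal{T}_1$ requires only $\mathcal{K}'=(\rho(h)+1)(w-1)/[\mathcal{K}(w+1)^3]$, obtained from $\mathcal{K}^2=1-2w(\rho(h)+1)/(w+1)^2$ together with $\frac{d}{dw}\frac{w}{(w+1)^2}=\frac{1-w}{(w+1)^3}$. A short computation then shows that $-\mathcal{T}_1'$ equals the second bracket of \eqref{TT:2} and $\mathcal{T}_1+w\mathcal{T}_1'$ equals the first, so their product is indeed $\mathcal{T}_2$; this part is clean. The expected main obstacle is the identity for $\mathcal{T}_3$: it needs $\mathcal{T}_1''$, hence $\mathcal{K}''$ and careful handling of the $\mathcal{K}^{-3}$ term, and one must check that the $\mathcal{T}_1/w^2$-type contributions cancel so that only the two terms of \eqref{TT:3}, with coefficients $(\rho(h)+1)$ and $(\rho(h)+1)^2(w-1)^2$, survive. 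The remaining points, namely Fubini and the representation of the cross moment through the density $g_h$, are routine once the moment is known to be finite.
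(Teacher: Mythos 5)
Your proposal is correct and takes essentially the same route as the paper's own proof: the paper likewise writes $\mathcal{Q}(h,\nu)=\mathbb{E}[X(s)^\nu X(s+h)^\nu]-[\Gamma(1-\nu)]^2$, passes to the coordinates $(u,w)=(x_1,x_2/x_1)$, and integrates out the radial variable via Fr\'echet-moment (Gamma) identities to obtain the factors $\Gamma(2(1-\nu))\mathcal{T}_1^{2(\nu-1)}$ and $\Gamma(1-2\nu)\mathcal{T}_1^{2\nu-1}$, the only difference being that the paper quotes the TEG bivariate density in the form $\big[u^{-4}\mathcal{T}_2+u^{-3}\mathcal{T}_3\big]e^{-\mathcal{T}_1/u}$ without deriving it from $G_h=e^{-V_h}$ and homogeneity as you do. The verification you flag as the remaining obstacle does go through: setting $B=\tfrac{\alpha(h)}{2}\mathcal{K}'=\tfrac{\alpha(h)(\rho(h)+1)(w-1)}{2\mathcal{K}(w+1)^3}$, one finds $2\mathcal{T}_1'+w\mathcal{T}_1''=2B+(w+1)B'$ (the $\mathcal{T}_1/w^2$-type contributions cancel exactly as you predicted), and this expression simplifies to \eqref{TT:3}.
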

\begin{proof}
\textcolor{black}{
We have, 
\begin{equation*}
 \mathrm{Cov}\big(\cD_X^{\nu}(s),\cD_X^{\nu}(s+h)\big)=\bE\big[\cD_X^{\nu}(s)\cD_X^{\nu}(s+h)\big]-\big[\bE[\cD_X^{\nu}(s)]\big]^2.
 \end{equation*}
  From Remark \ref{Rm:1}, $\bE[\cD_X^{\nu}(s)]=\Gamma(1-\nu)$. \veron{Moreover}, 
 \begin{equation*}\label{TEG1}
 \bE\big[\cD_X^{\nu}(s)\cD_X^{\nu}(s+h)\big]=\int_{0}^{\infty}\int_{0}^{\infty}x_1^{\nu}x_2^{\nu}f_{(X(s),X(s+h))}(x_1,x_2)\mathrm{d}x_1\mathrm{d}x_2,
 \end{equation*}
where $f_{(X(s),X(s+h))}(x_1,x_2)$ is the bivariate  density function of the TEG model.}
\vero{It rewrites:}
\textcolor{black}{\begin{equation*}\label{TEG3}
\bE[\mathcal{D}^\nu(s)\mathcal{D}^\nu(s+h)]=\int_{0}^{+\infty}\int_{0}^{+\infty}u^{2\nu+1}w^{\nu}f(u,uw)\mathrm{d}u\mathrm{d}w.
\end{equation*}
}
\vero{The bivariate density function of a TEG model \veron{is given by}}
\textcolor{black}{\begin{equation*}\label{TEG4}
f_{(X(s),X(s+h))}(u,uw)=\big[\frac{1}{u^4}\mathcal{T}_2(w,h)+\frac{1}{u^3}\mathcal{T}_3(w,h)\big]e^{\frac{-1}{u}\mathcal{T}_1(w,h)}
\end{equation*}
where $\mathcal{T}_1(w,h)$, $\mathcal{T}_2(w,h)$ and $\mathcal{T}_3(w,h)$ are \veron{given}  in (\ref{TT:1}), (\ref{TT:2}) and (\ref{TT:3}). Therefore 
\begin{equation*}\label{TEG5}
\begin{split}
\bE[\mathcal{D}^\nu(s)\mathcal{D}^\nu(s+h)]=&\\
\int_{0}^{+\infty}w^{\nu}\bigg[\mathcal{T}_2(w,h)\int_{0}^{+\infty}&u^{2\nu-3}e^{\frac{-1}{u}\mathcal{T}_1(w,h)}\mathrm{d}u+\mathcal{T}_3(w,h)\int_{0}^{+\infty}u^{2\nu-2}e^{\frac{-1}{u}\mathcal{T}_1(w,h)}\mathrm{d}u\bigg]\mathrm{d}w.
\end{split}
\end{equation*}
Moment properties of Fréchet distributions give}
\vero{\begin{equation*}\label{TEG6}
\int_{0}^{+\infty}u^{2\nu-3}e^{\frac{-1}{u}\mathcal{T}_1(w,h)}\mathrm{d}u=\frac{1}{\mathcal{T}_1(w,h)}\/.\mu_{(2\nu-1)},
\end{equation*}
}
with $\mu_{(2\nu-1)}$  the moment of order $k=(2\nu-1)$ \veron{of a Fréchet distribution}. 
\vero{In the same way, we get}
\textcolor{black}{\begin{equation*}\label{TEG9}
\int_{0}^{+\infty}u^{2\nu-2}e^{\frac{-1}{u}\mathcal{T}_1(w,h)}\mathrm{d}u=\mathcal{T}_1(w,h)^{(2\nu-1)}\Gamma(1-2\nu).
\end{equation*}
Then, 
\begin{equation*}\label{TEG10}
\begin{split}
\bE\big[\cD_X^{\nu}(s)\cD_X^{\nu}(s+h)\big]=&\\
\int_0^{+\infty}w^{\nu}\bigg[\mathcal{T}_2(w,h)\mathcal{T}_1(w,h)^{2(\nu-1)}&\Gamma2(\nu-1)+\mathcal{T}_3(w,h)\mathcal{T}_1(w,h)^{(2\nu-1)}\Gamma(1-2\nu)\bigg]\mathrm{d}w\/,
\end{split}
\end{equation*}
}
\vero{and the result follows.}
\end{proof}
  \vero{Corollary \ref{TEG} shows that the risk measure for a TEG process may be computed efficiently, since it reduces to a one dimensional integration involving a Gamma function.}
 \subsection{Behavior of $\cR_1(\lambda\cA,\cD^{\nu}_X)$ with respect to $\lambda$ for max-mixture processes.}
\vero{In what follows, we consider an} isotropic and stationary  max-mixture spatial process with unit Fréchet margin $F$. \vero{We denote $X$ and $V_h^X$ \veron{the} process and the exponent measure function corresponding to the max-stable part and $Y$ and $V_h^Y$ the process and the exponent measure function corresponding to the inverse max-stable process $Y$. Let $a\in [0\/,1]$, $Z= \max(aX\/,(1-a)Y)$. We shall study the behavior of $\cR_1\big(\lambda\cA,\cD^{\nu}_Z\big)$ with respect to $\lambda$. Of course, the case $a=1$ gives results for max-stable processes and $a=0$ gives results for inverse max-stable processes.  Recall that the bivariate distribution function is given by
$$ G_h^Z(x_1\/,x_2) = e^{-aV^X_h(x_1,x_2)}\bigg[e^{\frac{-(1-a)}{x_1}}+e^{\frac{-(1-a)}{x_2}}-1+e^{-V^Y_h(g_a(x_1),g_a(x_2))}\bigg]\/,$$
where $g(z) = -\frac1{\log(1-e^{-\frac1z})}$ and $g_a(z) = g(\frac{z}{1-a})$.\\
Lemma \ref{disk-max} and Proposition \ref{hoff} are a keystone to describe the behaviour of $\cR_1\big(\lambda\cA,\cD^{\nu}_Z\big)$.\\
As in Lemma 3.4 in \cite{ahmed2016spatial}, we get for any $\lambda >0$:} 
\begin{equation}\label{TEGhomo1} 
\cR_1(\lambda\cA,\cD_Z^{\nu})=\int_{h=0}^{\sqrt{2}R}f_{square}(h,R)\mathcal{Q}(\lambda h,\nu)\quad\mathrm{d}h.
\end{equation}
\begin{Co}\label{Prop-max1} 
Let $Z$ be an isotropic and stationary max-mixture spatial process \vero{as above. Assume that the mappings $h\mapsto V_h^X(x_1\/,x_2)$ and \veron{$h\mapsto V_h^Y(x_1\/,x_2)$} are non decreasing for any $(x_1\/,x_2)\in\bR^2_+$.  Let $\cA\subset\ss$ be either a disk or a square, then the mapping $\lambda\mapsto\cR_1(\lambda\cA,\cD_Z^{\nu})$ is non-increasing. }
\end{Co}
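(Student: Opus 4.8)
The plan is to keep the lag variable $h$ as the only place where $\lambda$ enters, and to turn the desired monotonicity in $\lambda$ into a monotonicity in $h$. By \eqref{TEGhomo1},
\[
\cR_1(\lambda\cA,\cD_Z^{\nu})=\int_{h=0}^{\sqrt{2}R}f_{square}(h,R)\,\mathcal{Q}(\lambda h,\nu)\,\mathrm{d}h
\]
(with the analogous identity involving $f_{disk}$ when $\cA$ is a disk), and $f_{square}$, $f_{disk}$ are non-negative since they are densities. Hence it suffices to prove that $h\mapsto\mathcal{Q}(h,\nu)$ is non-increasing: then for $\lambda_1\le\lambda_2$ and every fixed $h\ge0$ one has $\mathcal{Q}(\lambda_1 h,\nu)\ge\mathcal{Q}(\lambda_2 h,\nu)$, and integrating against the non-negative weight $f_{square}(\cdot,R)$ preserves the inequality.

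First I would reduce the monotonicity of $\mathcal{Q}(\cdot,\nu)$ to a pointwise statement about the bivariate distribution function. Using the representation \eqref{teg-QQ},
\[
\mathcal{Q}(h,\nu)=\nu^2\int_0^\infty\!\!\int_0^\infty x_1^{\nu-1}x_2^{\nu-1}\big[G_h^Z(x_1,x_2)-F(x_1)F(x_2)\big]\,\mathrm{d}x_1\,\mathrm{d}x_2,
\]
the weight $\nu^2 x_1^{\nu-1}x_2^{\nu-1}$ is non-negative and the subtracted product $F(x_1)F(x_2)$ does not depend on $h$. So it is enough to show that, for every fixed $(x_1,x_2)\in\bR_+^2$, the map $h\mapsto G_h^Z(x_1,x_2)$ is non-increasing; the finiteness of each $\mathcal{Q}(h,\nu)$ (guaranteed by $0<\nu<\frac12$) then lets the pointwise domination of integrands pass to the integrals.

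The core step is a direct inspection of the explicit form of $G_h^Z$. Writing
\[
G_h^Z(x_1,x_2)=e^{-aV^X_h(x_1,x_2)}\Big[\,\underbrace{e^{-(1-a)/x_1}+e^{-(1-a)/x_2}-1}_{=:C}+e^{-V^Y_h(g_a(x_1),g_a(x_2))}\Big],
\]
the term $C$ is constant in $h$, and $g_a(x_1),g_a(x_2)$ are fixed once $x_1,x_2$ are fixed. Under the hypothesis that $h\mapsto V_h^X(x_1,x_2)$ and $h\mapsto V_h^Y(x_1,x_2)$ are non-decreasing, the factor $e^{-aV^X_h(x_1,x_2)}$ is positive and non-increasing in $h$, while $e^{-V^Y_h(g_a(x_1),g_a(x_2))}$ is non-increasing in $h$, so the bracketed factor is non-increasing as well. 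Since $G_h^Z$ is a bivariate distribution function it is non-negative, and as the first factor is strictly positive the bracket is itself non-negative. A product of two non-negative non-increasing functions is non-increasing: for $h_1\le h_2$ one has $f(h_1)g(h_1)\ge f(h_2)g(h_1)\ge f(h_2)g(h_2)$, using $g(h_1)\ge0$ in the first inequality and $f(h_2)\ge0$ in the second. This gives the pointwise monotonicity of $G_h^Z$ and closes the chain.

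The argument is essentially monotonicity bookkeeping, so the only genuine subtleties are, first, checking that the bracketed factor stays non-negative — which I would read off from the non-negativity of $G_h^Z$ together with the positivity of $e^{-aV_h^X}$, rather than estimate $C$ directly — and second, the boundary cases $a=0$ and $a=1$, where one of the two factors degenerates to a constant (consistent with $Z$ reducing to an inverse max-stable, resp. max-stable, process) and which cause no difficulty. I do not expect a real obstacle here: the hypothesis on $V^X$ and $V^Y$ is exactly what makes each factor monotone, and the positivity of $f_{square}$ (or $f_{disk}$) and of the weight $\nu^2 x_1^{\nu-1}x_2^{\nu-1}$ transports this monotonicity unchanged through both integrations.
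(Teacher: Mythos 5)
Your proof is correct and follows essentially the same route as the paper's: reduce via \eqref{TEGhomo1} to monotonicity of $\mathcal{Q}(\cdot,\nu)$ in $h$, then via \eqref{teg-QQ} to pointwise monotonicity of $h\mapsto G_h^Z(x_1,x_2)$, which follows from the monotonicity hypotheses on $V_h^X$ and $V_h^Y$. The only difference is that you spell out details the paper leaves implicit, notably the non-negativity of the bracketed factor (deduced from $G_h^Z\geq 0$) needed to multiply the two non-increasing factors, which is a worthwhile clarification but not a different argument.
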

\begin{proof} 
\vero{We use (\ref{TEGhomo1}) and from Proposition \ref{hoff}, 
\veron{$$\mathcal{Q}( h,\nu) =\nu^2\int_{0}^{\infty}\int_{0}^{\infty}x_1^{\nu-1}x_2^{\nu-1}\big[G_h^Z(x_1\/,x_2)-F(x_1)F(x_2)\big]\mathrm{d}x_1\mathrm{d}x_2\/.$$}
Since $h\mapsto V_h^X(x_1\/,x_2)$ and \veron{$h\mapsto V_h^Y(x_1\/,x_2)$} are non decreasing, $h\mapsto G_h^Z(x_1\/,x_2)$ is non increasing and the result follows.}
\end{proof}
\vero{\begin{remark}
For a spatial max-stable or inverse max-stable process $X$, the fact that $h\mapsto V_h^X(x_1\/,x_2)$ is non decreasing implies that the dependence between $X(t)$ and $X(t+h)$ decreases as $h$ increases, which seems reasonable in applications. On another hand, if \veron{in addition}, $V_h^X(x_1\/,x_2)$ goes to $\frac1{x_1}+\frac1{x_2}$ as $h$ goes to infinity, this means that $X(t)$, $X(t+h)$ tend to behave independently as $h$ goes to infinity.
\end{remark}}
\begin{Co}\label{AIMM1}
\vero{Let $Z$ be an isotropic and stationary max-mixture spatial process \vero{as above}.  Assume that the mappings $h\mapsto V_h^X(x_1\/,x_2)$ and $h\mapsto V_h^Y(x_1\/,x_2)$ are non decreasing for any $(x_1\/,x_2)\in\bR^2_+$. Moreover, we assume that   
\begin{equation*}
  V_{ h}^X(x_1;x_2) \longrightarrow\frac{1}{x_1}+\frac{1}{x_2} \quad \mathrm{as} \quad h\to\infty
\end{equation*}
and
\begin{equation*}
  V_{h}^Y(x_1,x_2) \longrightarrow\frac{1}{x_1}+\frac{1}{x_2} \quad \mathrm{as} \quad h\to\infty
\end{equation*}
$\forall x_1 , x_2\in\bR_+$.  Let $\cA\subset\ss$ be either a disk or a square, \veron{we have} 
\begin{equation*}
  \lim_{\lambda\to \infty}\cR_1(\lambda\cA,\cD_Z^{\nu})=0.
\end{equation*}
If there exists $V_0$  (resp. $V_1$) an exponent measure function of a non independent max-stable (resp. inverse max-stable) bivariate random vector, such that $ V_{ h}^X \longrightarrow V_0$ \veron{(resp. $V_{ h}^Y \longrightarrow V_1$)} as $h \to \infty$, then
\begin{equation*}
  \lim_{\lambda\to \infty}\cR_1(\lambda\cA,\cD_Z^{\nu})>0.
\end{equation*}
}
\end{Co}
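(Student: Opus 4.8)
The plan is to start from the one-dimensional reduction \eqref{TEGhomo1},
\[
\cR_1(\lambda\cA,\cD_Z^{\nu})=\int_{h=0}^{\sqrt{2}R}f_{square}(h,R)\,\mathcal{Q}(\lambda h,\nu)\,\mathrm{d}h ,
\]
and to let $\lambda\to\infty$ by dominated convergence. Since $f_{square}(\cdot,R)$ is a probability density supported on $[0,\sqrt2\,R]$ and, by Cauchy--Schwarz, $0\le\mathcal{Q}(\lambda h,\nu)\le\mathcal{Q}(0,\nu)=\mathrm{Var}(\cD_Z^{\nu})<\infty$ for every $h$ (the bound being finite precisely because $0<\nu<\tfrac12$), the constant $\mathcal{Q}(0,\nu)$ dominates the integrand uniformly in $\lambda$. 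Everything therefore reduces to identifying the pointwise limit of $\mathcal{Q}(\lambda h,\nu)$ for each fixed $h>0$, i.e. the behaviour of $\mathcal{Q}(h,\nu)$ as $h\to\infty$.

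For that pointwise limit I would use the Hoeffding representation \eqref{teg-QQ} and pass to the limit inside the double integral by a second dominated-convergence argument: the bounds \eqref{max47} applied to $V_h^X$ and $V_h^Y$ give $G_h^Z\ge F(x_1)F(x_2)$ (so $Z$ is PQD), while the Fr\'echet--Hoeffding upper bound gives $G_h^Z\le\min(F(x_1),F(x_2))$, so the integrand is dominated by $x_1^{\nu-1}x_2^{\nu-1}[\min(F(x_1),F(x_2))-F(x_1)F(x_2)]$, whose integral is $\mathcal{Q}(0,\nu)/\nu^2<\infty$. It then remains to compute $\lim_{h\to\infty}G_h^Z$, and the key step is the algebraic collapse of the bracket in the max-mixture law. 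Writing $p=e^{-(1-a)/x_1}$, $q=e^{-(1-a)/x_2}$ and using the defining relation $e^{-1/g_a(z)}=1-e^{-(1-a)/z}$, the hypothesis $V_h^Y(y_1,y_2)\to 1/y_1+1/y_2$ forces $e^{-V_h^Y(g_a(x_1),g_a(x_2))}\to(1-p)(1-q)$, so the bracket tends to $p+q-1+(1-p)(1-q)=pq$; combined with $e^{-aV_h^X}\to e^{-a/x_1-a/x_2}$ this yields $G_h^Z\to e^{-1/x_1-1/x_2}=F(x_1)F(x_2)$. Hence $\mathcal{Q}(h,\nu)\to0$, and the outer dominated convergence gives $\lim_{\lambda\to\infty}\cR_1(\lambda\cA,\cD_Z^{\nu})=0$.

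For the second statement I would run the same two limit exchanges, now with $V_h^X\to V_0$ and $V_h^Y\to V_1$, obtaining $G_h^Z\to G_\infty^Z$ with
\[
G_\infty^Z=e^{-aV_0}\Big[e^{-(1-a)/x_1}+e^{-(1-a)/x_2}-1+e^{-V_1(g_a(x_1),g_a(x_2))}\Big],
\]
and correspondingly $\mathcal{Q}(\lambda h,\nu)\to\mathcal{Q}_\infty:=\nu^2\iint_{\bR_+^2}x_1^{\nu-1}x_2^{\nu-1}\big[G_\infty^Z-F(x_1)F(x_2)\big]\,\mathrm{d}x_1\mathrm{d}x_2$. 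As this limit is the same constant for every $h>0$ and $f_{square}$ integrates to $1$, the outer dominated convergence gives $\lim_{\lambda\to\infty}\cR_1(\lambda\cA,\cD_Z^{\nu})=\mathcal{Q}_\infty$, so the whole point becomes to show $\mathcal{Q}_\infty>0$. The bracket still satisfies the bound $\ge pq$, hence $G_\infty^Z\ge F(x_1)F(x_2)$ (PQD). Since $V_0$ (resp. $V_1$) is the exponent measure of a \emph{non-independent} vector, at least one of the strict inequalities $V_0<1/x_1+1/x_2$ or $V_1<1/y_1+1/y_2$ holds on a set of positive Lebesgue measure, where consequently $G_\infty^Z>F(x_1)F(x_2)$ strictly; as $x\mapsto x^{\nu}$ is strictly increasing, the Hoeffding integrand is nonnegative and strictly positive on that set, whence $\mathcal{Q}_\infty>0$.

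The routine part is the two dominated-convergence exchanges, for which the domination is supplied cleanly by $\mathcal{Q}(0,\nu)<\infty$ together with the Fr\'echet--Hoeffding bound. I expect the genuinely delicate steps to be, first, the exact simplification of the max-mixture bracket through $e^{-1/g_a(z)}=1-e^{-(1-a)/z}$, which is what makes the asymptotically independent limit factorise \emph{precisely} into $F(x_1)F(x_2)$; and, second, upgrading ``$V_0,V_1$ non-independent'' to a strict pointwise inequality $G_\infty^Z>F(x_1)F(x_2)$ on a positive-measure set, which is exactly what converts the weak (non-strict) PQD property into strict positivity of the limiting covariance $\mathcal{Q}_\infty$.
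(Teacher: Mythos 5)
Your proof is correct, and its skeleton coincides with the paper's: reduce via \eqref{TEGhomo1} and the Hoeffding representation \eqref{teg-QQ} to the pointwise limit of $G_h^Z$, identify that limit, and exchange limit and integration. The differences are still worth recording. The paper performs the exchange by the \emph{monotone} convergence theorem, exploiting the hypothesis that $h\mapsto V_h^W(x_1,x_2)$ is non-decreasing so that $G_h^Z$ decreases pointwise to $F(x_1)F(x_2)$; you instead use \emph{dominated} convergence twice, with the domination supplied by Cauchy--Schwarz ($0\le \mathcal{Q}(\lambda h,\nu)\le \mathcal{Q}(0,\nu)<\infty$) in the outer integral and by the Fr\'echet--Hoeffding upper bound $G_h^Z\le\min\big(F(x_1),F(x_2)\big)$ together with PQD in the inner one. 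Your route is slightly more general (monotonicity of $V_h^W$ in $h$ becomes superfluous for the convergence itself; only pointwise convergence of $V_h^W$ is used), at the cost of checking integrability of the dominating function, which you do correctly since $\mathcal{Q}(0,\nu)=\Gamma(1-2\nu)-\Gamma(1-\nu)^2<\infty$ exactly because $\nu<1/2$. You also make explicit two points the paper's one-line proof leaves implicit or omits entirely: the algebraic collapse of the max-mixture bracket to $pq$ via $e^{-1/g_a(z)}=1-e^{-(1-a)/z}$ (the paper merely asserts $G_h^Z\to F(x_1)F(x_2)$), and, more importantly, the whole second claim: the paper's proof stops at the monotone convergence step and never addresses why the limit is \emph{strictly} positive when $V_h^X\to V_0$ and $V_h^Y\to V_1$ with non-independent limits, whereas you identify the limit as the constant $\mathcal{Q}_\infty$ and derive its positivity.

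One refinement is needed in that positivity argument. The strict inequality $V_0(x_1,x_2)<1/x_1+1/x_2$ on a positive-measure set forces $G_\infty^Z>F(x_1)F(x_2)$ there only when $a>0$ (if $a=0$ the factor $e^{-aV_0}$ is identically $1$ and $V_0$ disappears from the model); symmetrically, the strict inequality for $V_1$ only bites when $a<1$ (if $a=1$ then $p=q=1$, $g_a\equiv 0$, and the bracket degenerates to the constant $1$). So your phrase ``at least one of the strict inequalities holds'' is not quite sufficient as stated: it matters \emph{which} one holds relative to the value of $a$. Since the corollary assumes both limits $V_0$ and $V_1$ are non-independent, the repair is immediate --- use $V_0$ when $a>0$ and $V_1$ when $a<1$, and every $a\in[0,1]$ falls in at least one of these cases --- but the case distinction should be made explicit. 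With that caveat your argument is complete, and strictly more informative than the proof in the paper.
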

\begin{proof}
\vero{In the case of $A$ a square of side $R$, we use 
\veron{$$\mathcal{Q}( h,\nu) =\nu^2\int_{0}^{\infty}\int_{0}^{\infty}x_1^{\nu-1}x_2^{\nu-1}\big[G_h^Z(x_1\/,x_2)-F(x_1)F(x_2)\big]\mathrm{d}x_1\mathrm{d}x_2\/.$$}
If $  V_{ h}^W(x_1;x_2) $ is non decreasing to $\frac{1}{x_1}+\frac{1}{x_2} $ as  $h\to\infty$ for $W=X$ or $W=Y$, then $G_h^Z(x_1,x_2)$ is non increasing to $F(x_1)F(x_2)$ and we conclude by using the monotone convergence theorem.
}
\end{proof}
\begin{Co}\label{anti-mono}
\vero{Let $Z$ be an isotopic and stationary max-mixture as above.  Assume that $h\mapsto V_{h}^W(x_1\/,x_2)$ is non increasing, with $W=X$ or $W=Y$. Let $\cA_1$ and $\cA_2$ be either disks or squares such that $|\cA_1|\leq|\cA_2|$ then 
\begin{equation*}
\cR_1(\cA_2,\cD_{Z}^\nu)\leq\cR_1(\cA_1,\cD_{Z}^\nu).
\end{equation*}  
}
\end{Co}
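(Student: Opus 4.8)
The plan is to write both risk measures as one-dimensional integrals of the \emph{same} covariance kernel $\mathcal{Q}(\cdot,\nu)$ against the appropriate distance density, rescale so that the size parameter enters only inside the argument of $\mathcal{Q}$, and then conclude from the monotonicity of $\mathcal{Q}$. First I would restrict to the natural reading of the statement, namely that $\cA_1$ and $\cA_2$ are of the same type (both disks or both squares), say of radius/side $R_1$ and $R_2$; since within a fixed shape the area is a strictly increasing function of the scale parameter ($|\cA|=\pi R^2$ for a disk, $|\cA|=R^2$ for a square), the hypothesis $|\cA_1|\leq|\cA_2|$ is equivalent to $R_1\leq R_2$. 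By Lemma \ref{disk-max}, equations (\ref{Th-disk})--(\ref{Th-square}), we then have $\cR_1(\cA_i,\cD_Z^\nu)=\int_0^{cR_i}\mathcal{Q}(h,\nu)\,f(h,R_i)\,\mathrm{d}h$, with $(c,f)=(2,f_{disk})$ or $(\sqrt{2},f_{square})$ according to the shape.

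Next I would exploit that $f_{disk}$ and $f_{square}$ are homogeneous of degree $-1$ in $(h,R)$: inspecting the explicit formulas, each is $R^{-1}$ times a function of the single ratio $h/R$, so $f(h,R)=R^{-1}f(h/R,1)$. The substitution $h=R_iu$ then gives $\cR_1(\cA_i,\cD_Z^\nu)=\int_0^{c}\mathcal{Q}(R_iu,\nu)\,\widetilde f(u)\,\mathrm{d}u$, where $\widetilde f=f(\cdot,1)\geq 0$ is the (non-negative) distance density of the unit region; this is the region analogue of (\ref{TEGhomo1}). At this point the size parameter appears only through the argument $R_iu$ of the kernel, and the weight $\widetilde f$ is the same for $\cA_1$ and $\cA_2$. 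Consequently, if $\mathcal{Q}(\cdot,\nu)$ is non-increasing, then $R_1\leq R_2$ forces $\mathcal{Q}(R_2u,\nu)\leq\mathcal{Q}(R_1u,\nu)$ for every $u$, and integrating against $\widetilde f\geq 0$ yields $\cR_1(\cA_2,\cD_Z^\nu)\leq\cR_1(\cA_1,\cD_Z^\nu)$, which is the claim.

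It therefore remains to establish that $h\mapsto\mathcal{Q}(h,\nu)$ is non-increasing, and this is the only substantive point. Using the representation (\ref{teg-QQ}), the weights $x_1^{\nu-1}x_2^{\nu-1}$ are strictly positive, so $\mathcal{Q}(\cdot,\nu)$ inherits the monotonicity of $h\mapsto G_h^Z(x_1,x_2)$; the monotonicity hypothesis on $V_h^X$ and $V_h^Y$ forces $h\mapsto G_h^Z(x_1,x_2)$ to be non-increasing, exactly as in the proof of Corollary \ref{Prop-max1}. The main obstacle is precisely this monotonicity of $G_h^Z$: because $Z$ is a max-mixture, $G_h^Z$ couples $V_h^X$, $V_h^Y$ and the nonlinear rescaling $g_a$ through (\ref{max22}), so one must verify that the composite expression is monotone in $h$ for each fixed $(x_1,x_2)$ — this is the same computation that underlies Corollary \ref{Prop-max1}, and it is where all the work sits. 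A minor secondary point, immediate from positivity of the weights together with monotone convergence, is to pass the monotonicity in $h$ through the double integral defining $\mathcal{Q}$.
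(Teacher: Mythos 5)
Your proposal is correct and follows essentially the same route as the paper: the paper's proof also reduces, via translation invariance, to comparing $\cR_1(\lambda\cA,\cD_Z^\nu)$ with $\cR_1(\cA,\cD_Z^\nu)$ for $\lambda\geq 1$, and then invokes the scaling identity (\ref{TEGhomo1}) together with the monotonicity of $\lambda\mapsto\cR_1(\lambda\cA,\cD_Z^\nu)$ from Corollary \ref{Prop-max1}. Your explicit derivation of the scaling identity from the degree $-1$ homogeneity of $f_{disk}$ and $f_{square}$, and your re-running of the $G_h^Z$ monotonicity argument, simply unpack the two ingredients the paper cites.
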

\begin{proof}
\vero{
Since the risk measure $\cR_1(\cA\/, \cD_Z^\nu)$ is \veron{invariant} by translation, we may assume that   $\cA_1=\lambda\cA_2$ for some $\lambda\geq 1$. Then, Equation  (\ref{TEGhomo1}) gives the result.
}
\end{proof}
\section{\vero{Numerical} study}\label{R:5}
\vero{In} this section, we will study  the behavior of  the spatial covariance damage  function  and its spatial risk measure  corresponding to a  stationary and isotropic max-stable, inverse max-stable   and max-mixture processes. \vero{We shall use the correlation functions introduced in \cite{abrahamsen1997review}.} 
\begin{enumerate}
  \item Spherical correlation function:
        \begin{equation*}
          \rho^{sph}_{\theta}(h)=\bigg[1-1.5\bigg(\frac{h}{\theta}\bigg)+0.5\bigg(\frac{h}{\theta}\bigg)^3\bigg]\mathds{1}_{\{h>\theta\}}.
        \end{equation*}
  \item Cubic correlation function :
        \begin{equation*}
          \rho^{cub}_{\theta}(h)=\bigg[1-7\bigg(\frac{h}{\theta}\bigg)+\frac{35}{2}\bigg(\frac{h}{\theta}\bigg)^2-\frac{7}{2}\bigg(\frac{h}{\theta}\bigg)^5+\frac{3}{5}\bigg(\frac{h}{\theta}\bigg)^7\bigg]\mathds{1}_{\{h>\theta\}}.
        \end{equation*}
  \item { Exponential correlation functions:
        \begin{equation*}
          \rho^{exp}_{\theta}(h)=\exp\big[-\frac{h}{\theta}\big],
        \end{equation*}}
 \item { Gaussian correlation functions:
        \begin{equation*}
           \rho^{gau}_{\theta}(h)=\exp\big[-\big(\frac{h}{\theta}\big)^2\big];
        \end{equation*}}
 \item {Mat\'ern} correlation function: 
\begin{equation*}
 \rho^{mat}(h)=\frac{1}{\Gamma(\kappa)2^{\kappa-1}}( h/\theta)^{\kappa}K_\kappa(h/\theta)\/,
\end{equation*}
\end{enumerate}
where $\Gamma$ is the gamma function, $K_{\kappa}$ is the modified Bessel function of second kind and order $\kappa>0$, $\kappa$ is a smoothness parameter and $\theta$ is a scaling parameter. 
\subsection{Analysis  \vero{ of the} covariance damage function $\mathcal{Q}(h,\nu)$} 
\vero{The covariance damage function plays a central role in the study of the risk measure $\mathcal{R}_1(\mathcal{A}\/,\mathcal{D}_X^\nu)$.}
\subsubsection{Analysis of $\mathcal{Q}(h,\nu)$ \veron{for max-stable processes}}\label{corre}
We study the behavior of $\mathcal{Q}(h,\nu)$ and $\cR_1(\lambda\cA,\cD^\nu_X)$ \vero{for  $X$ a}  TEG spatial max-stable process, with trunacted parameter $r$, \vero{ non-negative correlation function $\rho$ and} correlation length $\theta$. We \vero{shall denote by $\mathcal{Q}_{\theta,r}(h,\nu)$ the covariance damage function in order to emphasize the dependence of the parameters}. Five different models \vero{with different correlation functions (exponential, Gaussian, spherical, cubic and Matern) introduced above are considered}. \\
 \\
 The behavior of \vero{$\mathcal{Q}_{\theta,r}(h,\nu)$ is  shown} in Figure \ref{MaxF1}.(a). We set the power coefficient  $\nu=0.20$, $r=0.25$ and $\theta=0.20$. \vero{We have that},  $\mathcal{Q}_{\theta,r}(h,\nu)=0$ for any $h\geq2r$; \vero{the decreasing speed changes according to the different  dependence structures.}\\
 \\
For the behavior of $\big(\cD_Y^\nu(s), s\in\ss\big)$ with respect to $\theta$ \vero{is shown in  Figure \ref{MaxF1}(b).} 
   Figure \ref{MaxF1}(c) shows the  behavior of $\mathcal{Q}_{\theta,r}(h,\nu)$  with respect to \vero{the} truncated parameter $r$. We set $\nu=0.20$, $h=0.25$ and $\theta=0.20$.\\
Finally, we study the behavior of the spatial damage covariance function with respect to power coefficient  $\nu$. We set $h=0.25$, $\theta=0.20$ and $r=0.25$.  Figure\ref{MaxF1}.(d) shows  that the covariance  between the damage functions $\cD_Y^\nu(\cdot)$ and $\cD_Y^\nu(\cdot+h)$ \veron{increases} with $\nu$. 
 \begin{figure}[h]
\centering    
\fboxrule=1pt 
\fbox{\includegraphics[width=12cm]{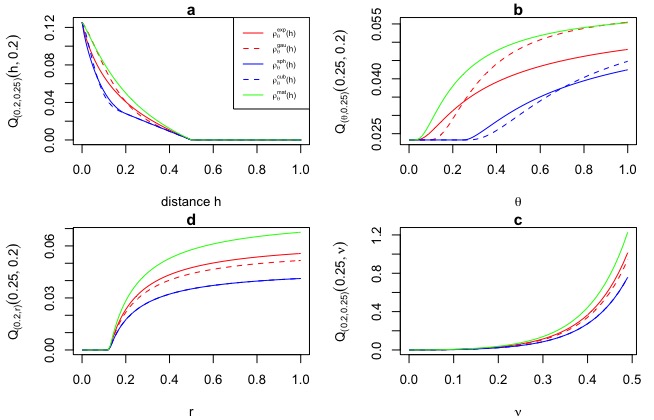}}
\caption{ shows the behavior of $\mathcal{Q}_{\theta,r}(h,\nu)$ with respect to the power coefficient  $\nu$, the correlation length $\theta$, the distance $h$ and truncated parameter $r$. \vero{Plain lines correspond to TEG and dashed lines correspond to inverse TEG.} Five non-negative correlation functions (exponential, Gaussian, spherical, cubic {and Mat\'ern with $\kappa=1$})  have been examined. The graphs (a), (b) ,(c) and (d) show the behavior of $\mathcal{Q}_{\cdot,\cdot}(\cdot,\cdot)$ with respect to: (a)  the distance $h$, when $\nu=0.2$ ,$\theta=0.2$ and $r=0.25$; (b) the correlation length $\theta$, when $\nu=0.2$, $r=0.25$ and $h=0.25$; (c) the truncated parameter $r$, when $\nu=0.2$, $\theta=0.20$ and $h=0.25$; (c) the power coefficient $\nu$, when  $\theta=0.20$, $r=0.25$ and $h=0.25$.}
 \label{MaxF1}
\end{figure}
\begin{remark}
The \vero{global behavior of $\mathcal{Q}_{\theta,r}(h,\nu)$  for an inverse  TEG is the same as for the TEG with the same parameters.}
\end{remark}
\subsubsection{Analysis of $\mathcal{Q}(h,\nu)$ for max-mixture processes}\label{corre12}
\vero{Max-mixture models with TEG max-stable part, denoted $X$ and inverse TEG for the inverse max-stable part - denoted \veron{by} $Y$ - cover} all possible dependence structures in one model (asymptotic dependence  \veron{at} short distances, asymptotic independence \veron{at}  intermediate distances and independence \veron{at} long distances).  \vero{We have simulated five max-mixture  models according to the correlation functions above,}  $X$ and $Y$ have \veron{the} same correlation functions with different correlation lengths. \vero{$r_X$ and $r_Y$ denote the respective truncation parameter of $X$ and $Y$, $\rho_X$ and $\rho_Y$ denote the respective correlation functions of $X$ and $Y$, $\theta_X$ and $\theta_Y$ denote the respective correlation length. The mixing parameter is \veron{denoted} by $a$.}\\
\\
We set \veron{the parameters} $a=0.5$, $r_X=0.15$, $\theta_X=0.10$, $r_Y=0.35$, $\theta_Y=0.3$ and finally $\nu=0.2$. \vero{In this model, the  damage functions $\cD_Y^\nu(\cdot)$ and $\cD_Y^\nu(\cdot+h)$ are asymptotically dependent up to distance $h<2r_X$. The decreasing speed depends on the correlation function, as shown  in Figure \ref{MaxF2}.}
\\ 
\\
\begin{figure}[h]
\centering    
\fboxrule=0pt 
\fbox{\includegraphics[width=12cm]{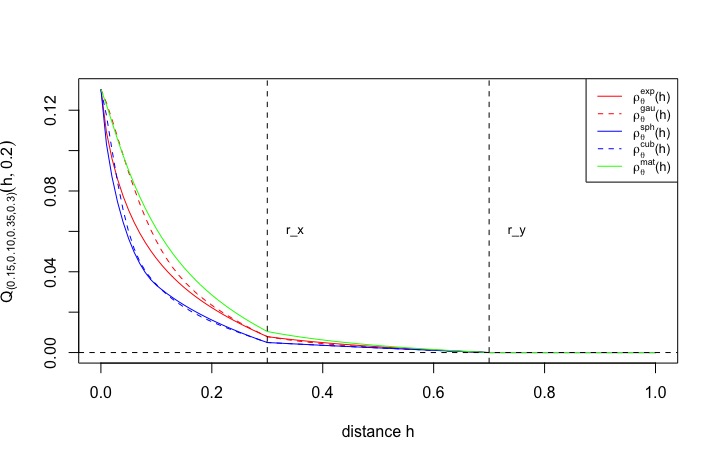}}
\caption{ {shows the behavior of $\mathcal{Q}(h,\nu)$ with respect to \veron{the} distance $h$. Five non-negative correlation functions (exponential, Gaussian, spherical, cubic {and Mat\'ern with $\kappa=1$})  have been examined when $a=0.5$, $\nu=0.2$ and $X$ is \veron{a} TEG max-stable \veron{process} with $\theta_X=0.15$ and $r_X=0.10$; $Y$ is \veron{an inverted TEG process}  with $\theta_Y=0.35$ and $r_Y=0.30$.}}
 \label{MaxF2}
\end{figure}
Figure \ref{MaxF3} shows the behavior of $\mathcal{Q}(h,\nu)$  with respect to each parameter. \vero{When it is not varying, each parameter is  fixed to}  $a=0.5$, $h=0.25$, $\nu=0.2$, $\theta_X=0.1$, $\theta_Y=0.3$, $r_X=0.15$ and $r_Y=0.35$. Graph (a) shows the behavior of $\mathcal{Q}$ with respect to \veron{the mixing parameter}  $a$. The graphs from (b) to (f) shows  the behavior of $\mathcal{Q}$ with respect to \vero{the other parameters. The \veron{behavior} is the same as for max-stable processes.} 
\begin{figure}[h]
\centering    
\fboxrule=1pt 
\fbox{\includegraphics[width=12cm]{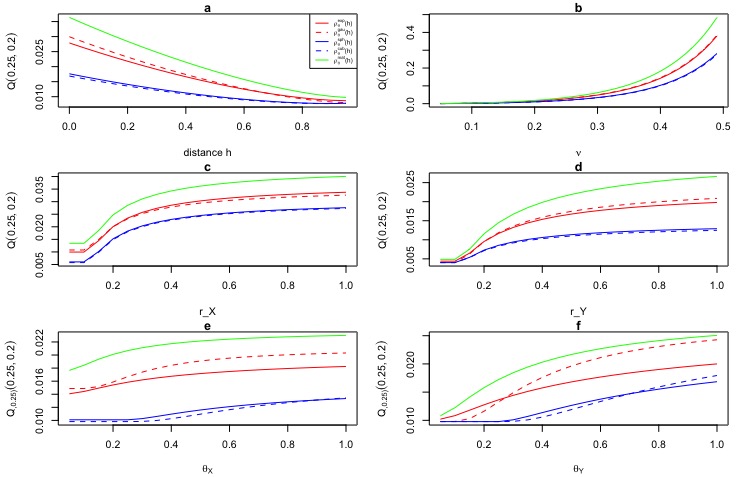}}
\caption{ (a) \veron{shows} the behavior of $\mathcal{Q}(h,\nu)$ with respect to \veron{the} mixing parameter $a$, the power coefficient  $\nu$, the correlation lengths $\theta_X$, $\theta_Y$, and \veron{truncation} parameters $r_X$, $r_Y$. Five non-negative correlation functions (exponential, Gaussian, spherical, cubic {and Mat\'ern with $\kappa=1$})  have been examined.  For $a=0.5$, $h=0.25$, $\nu=0.2$, $\theta_X=0.1$, $\theta_Y=0.3$, $r_X=0.15$ and $r_Y=0.35$, the graphs (a),(b),(c),(d),(e) and (f) show the behavior of $\mathcal{Q}(\cdot,\cdot)$ with respect to: (a)  the mixing parameter  $a$; (b) the power coefficient $\nu$; (c) the \veron{truncation} parameter $r_X$; (d) the truncated parameter $r_Y$; (f) the correlation length $\theta_X$; (e) the correlation length $\theta_Y$.}
 \label{MaxF3}
\end{figure}
 \subsection{Numerical computation of $\cR_1(\cA,\cD^\nu)$}\label{C:1}
 In this study, we \vero{compute} $\cR_1(\cA,\cD^\nu)$ \vero{for different max-stable processes $X$, inverse max-stable processes $Y$ and max-mixture processes $Z$. We considered $X$ a TEG with parameters $r_X$ and $\theta_X$, $Y$ a Smith process with \veron{parameter} $\sigma^2_Y$.  The process $Z$ is a max-mixture between $X$ and $Y$. Max-stable and inverse max-stable models are achieved for $a=1$ and $a=0$, respectively. We compute $\cR_1(\cA,\cD^\nu)$ using (\ref{CTEG}) and (\ref{teg-QQ}) i.e. a $3$ dimensional integration. For these models, the reduction to a one dimensional integration seem not possible. We shall compare this computed value with the Monte Carlo estimation obtained by simulating the process $Z$. In this simulation study, the TEG has parameters:   $r_X=0.25$,  non-negative exponential correlation function with $\theta_X=0.20$. The inverse max-stable $Y$, is given by  a Smith  max-stable process $Y'$ with $\sigma^2_{Y'}=1$. The process $Z$ is simulated } with $n=50$ locations on a grid over a square $\cA=[0,1]^2$. We set the power coefficient $\nu:=\{0.05,0.15,0.25,0.35,0.40\}$ and  mixing parameter $a:=\{0,0.25,0.5,0.75,1\}$.\\
 \\
\vero{The intuitive Monte-Carlo computation (M1),  is obtained by  generating a $m=1000$ sample of $Z$ on the grid.} Then,
$$L_j(\mathcal{A},\mathcal{D}_{Z,u}^+)=\frac{1}{|\mathcal{A}|}\bigg[\frac{R}{n-1}\bigg]^2\sum_{i=1}^{n-1}Z^*(s_{ij})\quad j=1,...,m,
$$
where, $Z^*(s_{ij})=|Z(s_{ij})|^\nu$ 
$$
\bE^{M1}[L(\cA,\cD_{Z}^\nu)]=\frac{1}{m}\sum_{j=1}^{m}L_j(\mathcal{A},\mathcal{D}_{Z}^\nu)
$$
and 
\vero{\begin{equation}\label{simulation}
\mathrm{Var}^{M1}(L(\cA,\cD_{Z}^\nu))=\frac1{m-1}\sum_{j=1}^m(L_j(\mathcal{A},\mathcal{D}_{Z}^\nu)-\bE^{M1}[L(\cA,\cD_{Z}^\nu)])^2\/.
\end{equation}
}
Boxplots in Figure \ref{GauF4}.  represent  the relative errors over $100$ (M1) \vero{simulations}  with respect to the $3$ dimensional integration. \vero{It shows that \veron{the considered risk measures are} hardly estimated by Monte Carlo for $\nu$ greater than $0.30$. \veron{Let us emphasize} that in the $3$ dimensional integration, we used (\ref{teg-QQ}). Using (\ref{TEG-t}) creates numerical issues when $\nu$ approaches $0.4$.}
\begin{figure}[h]
\centering
\fboxrule=0pt 
\fbox{\includegraphics[width=12cm]{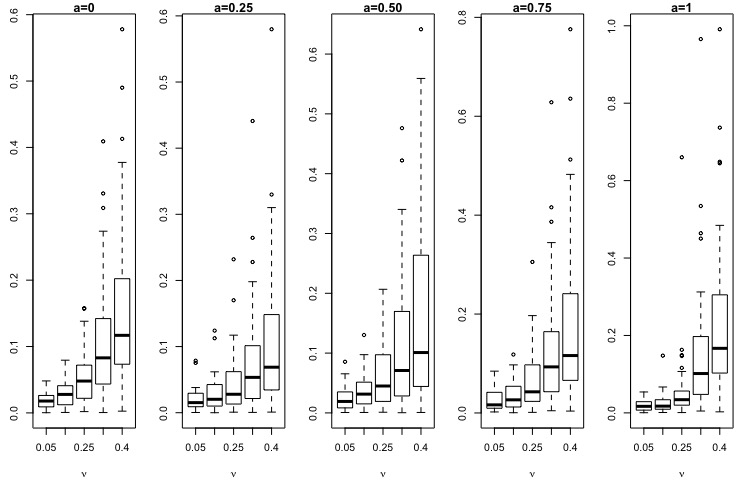}}
\caption{ The boxplots represent the relative errors of \vero{the Monte Carlo estimation of} $\mathrm{Var}(L(\mathcal{A},\mathcal{D}_{Z}^\nu))$ \vero{with respect to the $3$ dimensional integration} for different power coefficient $\nu:=\{0.05,0.15,0.25,0.35,0.40\}$ and  mixing parameter $a:=\{0,0.25,0.5,0.75,1\}$ with parameters $r_X=0.25$ and $\theta_X=0.20$ corresponding to \veron{the} max-stable \veron{process}  $X$ and with $\sigma^2=1$ corresponding to \veron{the inversed Smith process} $Y$  over  a square $\cA=[0,1]^2$.\label{GauF4}}
\end{figure}
\subsection{\vero{Behavior  of } $\cR_1(\lambda\cA,\cD_X^\nu)$}\label{R:beh}
\vero{We are going to study the behavior} of $\cR_1(\lambda\cA,\cD_X^\nu)$ with respect to $\lambda$ \vero{for}  $\cA=[0,1]^2$ \vero{a square and several models. We fixed $\nu=0.20$,  and  $a=0.50$} for max-mixture models and also we will evaluate  $\cR_1(\lambda\cA,\cD_X^\nu)$ with respect to \veron{the} mixing parameter $a$.
 \vero{We considered two models for} $X$: TEG with $r=0.250$ and non-negative exponential correlation function with correlation length $\theta=0.20$;  \veron{Smith} with $\sigma^2=0.6$. \vero{We considered two inverse max-stable processes }$Y$: Inverse TEG and \vero{inverse Smith. The process $Z$ is the max mixture $Z=\max(aX\/,(1-a)Y)$. The different chosen parameters are listed below.}
\begin{itemize}
\item MM1: $X$ is TEG with the \veron{parameters as for the} TEG max-stable \veron{process} above and $Y$ is inverse TEG with $r_Y=0.45$ and non-negative exponential correlation function with correlation length $\theta=0.40$.
\item MM2: $X$ is TEG max-stable with the same \veron{parameters as for } MM1 and  $Y$ is inverse Smith with $\sigma_Y^2=0.8$.
\end{itemize}
\begin{figure}[h]
\centering
\fboxrule=1pt 
\fbox{\includegraphics[width=11cm]{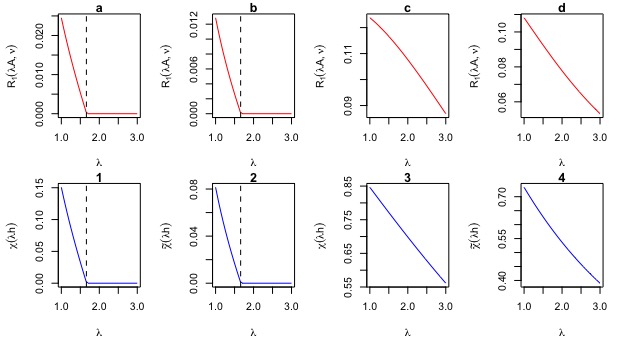}}
\caption{ The graphs represent the behavior of $\cR_1(\lambda\cA,\cD_X^\nu)$ with respect to $\lambda$ \vero{for $\nu=0.20$, a square $\cA=[0,1]^2$ and the } corresponding to tail and lower tail dependence coefficients.  Four models \vero{are considered}:(a) TEG model with truncated parameter $r_X=0.25$ and exponential correlation function with correlation length $\theta_X=0.20$; (b) inverse  TEG max-stable with the \vero{same parameters as in}  (a); (c) Smith max-stable process with $\sigma^2=0.6$; (d) inverse Smith max-stable process with the same \vero{parameters as in} (c). Finally the graphs (1),(2),(3) and (4) represent the  tail and lower tail dependence coefficients corresponding to each model receptively, $h=0.3$.\label{max_invers_max}}
\end{figure}
\vero{Figure \ref{max_invers_max} \veron{is devoted to} max-stable and inverse max-stable processes (no mixture). It shows that  $\cR_1(\lambda\cA,\cD_X^\nu)$ for max-stable and inverse max-stable processes are very similar. Their behavior mimics also the one of $\chi(h)$ in the max-stable case, or $\overline{\chi}(h)$ in the inverse max-stable case. In this picture, we have chosen $h=0.3$.}
\begin{figure}[h]
\centering
\fboxrule=1pt 
\fbox{\includegraphics[width=11cm]{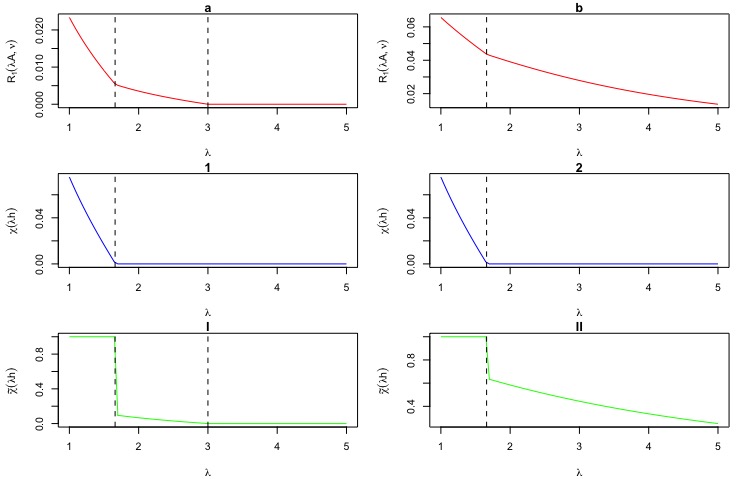}}
\caption{shows the behavior of $\cR_1(\lambda\cA,\cD_X^\nu)$, $\chi(h)$ and $\overline{\chi}(h)$ for two max-mixture models.
\label{MIX_ALL}
}
\end{figure}

Figure \ref{MIX_ALL}.(a) shows the behavior of  $\cR_1(\lambda\cA,\cD_X^\nu)$ \vero{for the} max-mixture model MM1.  It shows a relatively high \vero{value} for $\cR_1(\lambda\cA,\cD_X^\nu)$  up to $0.3\lambda<2r_X$. \vero{Figure \ref{MIX_ALL}.(b) \veron{is devoted to the} model MM2.} \vero{The global behavior is the same for the two models. \veron{We remark that the rupture parameter $r_Y$ is hardly identified on these graphs.} }
 Figure \ref{MIX_ALL}.(b) shows the behavior of  $\cR_1(\lambda\cA,\cD_X^\nu)$ with respect to the max-mixture model MM2. We can see the same behavior of the asymptotic dependence part in MM1 when $0.3\lambda<2r_X$ and \vero{the decrease to  zero from $0.3\lambda\geq2r_X$. The speed of decrease to zero  depends the chosen model. } \\
 \begin{figure}[h]
\centering
\fboxrule=1pt 
\fbox{\includegraphics[width=9cm]{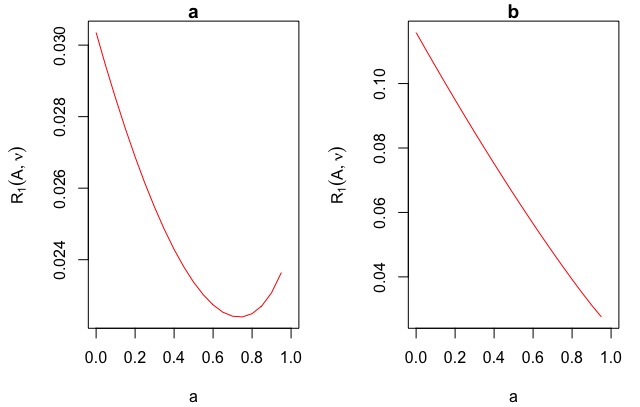}}
\caption{ shows the behavior of $\cR_1(\lambda\cA,\cD_X^\nu)$ with respect to mixing parameter $a$ for two max-mixture models : (a) MM1 model; (b) MM2 model. \label{with_r_a}}
\end{figure}
\\
Figures \ref{with_r_a}. (a) and (b) shows \vero{the behavior of} $\cR_1(\lambda\cA,\cD_X^\nu)$ \vero{with respect to $a$}.  
\section{Conclusion}\label{R:7}
We have \vero{developped the study of the} risk measure  $\cR(\cA,\cD^\nu)$  for spatial \vero{processes allowing asymptotic dependence and asymptotic independence. This risk measure  } takes into account the spatial dependence structure over a region. \vero{It satisfies} the axioms from \cite{ahmed2016spatial} and \cite{koch2015spatial}  for isotropic and stationary max-mixture  processes. A simulation study emphasized the behavior of the risk measure with respect to the various parameters. Finally, the   \vero{sensitivity of spatial risk measures  with different dependence structures is studied  for two different models. }

\ \\
\ \\
\underline{Acknowledgements:} This work was supported by the LABEX MILYON (ANR-10-LABX-0070) of Université de Lyon, within the program "Investissements d'Avenir" (ANR-11-IDEX-0007) operated by the French National Research Agency (ANR).






\bibliographystyle{plain}
\bibliography{sample}


\end{document}